\documentclass[oneside]{amsart}

\usepackage{amsthm,amsmath,amssymb,amsfonts}
\usepackage{ytableau}
\usepackage[colorlinks=true]{hyperref}
\usepackage{cleveref}
\usepackage{geometry}
\usepackage{tikz}
\usepackage{array}
\usetikzlibrary{positioning}

\usepackage[msc-links, numeric]{amsrefs}
 
\usepackage{algorithm}
\usepackage{algorithmicx}
\usepackage{algpseudocode}

\usepackage{tikz-cd}
\usepackage{mathalpha}

\makeatletter
\newcommand{\cbigoplus}{\DOTSB\cbigoplus@\slimits@}
\newcommand{\cbigoplus@}{\mathop{\widehat{\bigoplus}}}
\makeatother

\title[Complex Representations of Groups and Involutions of its Automorphisms]{Complex Representations of Groups and Involutions of its Automorphisms}

\author[Yerrapati]{Venkata Subbaiah Yerrapati}
\address{(Yerrapati) Department of Mathematics, S. V. National Institute of Technology, Surat-7, Gujarat, India}
\email{yvsmath@gmail.com}
\thanks{The first author would like to thank the Department of Education, Government of India, for the financial assistance.}

\author[Dixit]{Rahul Dixit}
\address{(Dixit) Department of Artificial Intelligence, S. V. National Institute of Technology, Surat-7, Gujarat, India}
\email{rahuldixit@aid.svnit.ac.in}

\author[Shukla]{Ajay Kumar Shukla}
\address{(Shukla) Department of Mathematics, S. V. National Institute of Technology, Surat-7, Gujarat, India}
\email{aks@amhd.svnit.ac.in}

\theoremstyle{plain}
\newtheorem{theorem}{Theorem}[section] 
\newtheorem{lemma}[theorem]{Lemma} 
\newtheorem{proposition}[theorem]{Proposition}
\newtheorem{corollary}[theorem]{Corollary}

\theoremstyle{definition}
\newtheorem{definition}[theorem]{Definition} 

\theoremstyle{remark}
\newtheorem{remark}[theorem]{Remark} 

\newcommand{\ord}{ord}
\newcommand{\lcm}{lcm}

\begin{document}

 \begin{abstract}
       In this work, we establish a relationship between the sum of irreducible character degrees and the number of twisted involutions associated with the automorphisms of a finite group. We develop algorithmic frameworks for evaluating these quantities in the context of inner automorphisms and the symmetric group $\mathfrak{S}_n$. As an application, we provide a criterion for identifying groups that possess complex (non-real) irreducible representations and explore the structural consequences arising from these results.
    \end{abstract}

    \subjclass[2020]{20B25, 20B30, 20C30, 20D45}

    \keywords{automorphism, finite group representations, involutions, symmetric groups}
    
    \maketitle
    \section{Introduction}
    The representation theory of finite groups, particularly through the lens of Frobenius-Schur indicators, reveals deep connections between the internal structure of a group and its involutions. Originating from the work of Frobenius and Schur \cite{zbMATH02646565}, the concept of the indicator was introduced to connect irreducible representations with involutions. Specifically, a classical result states that for a finite group $G$, the number of solutions to the equation $g^2 = e$ coincides with the sum of the degrees of all distinct irreducible real representations of $G$. This foundational theory has been extended in several directions. Kawanaka and Matsuyama \cite{MR1078503} introduced the twisted Frobenius-Schur indicator for an automorphism $\sigma \in \mathrm{Aut}(G)$ satisfying $\sigma^2 = 1$. This notion was later generalized by Bump and Ginzburg \cite{MR2068079} to automorphisms $\sigma$ of arbitrary order $r$ (where $\sigma^r = 1$). More recently, a refined version of these indicators was proposed by Ceccherini-Silberstein, Scarabotti, and Tolli \cite{MR3320995}.
    
    Motivated by these developments, Des MacHale proposed a conjecture extending the work of Frobenius and Schur to other automorphisms and complex representations, which is recorded by Khukhro and Mazurov in the Kourovka Notebook \cite{MR4842130}*{Problem 16.60}. Some of the ideas were subsequently explored by Gow \cite{MR716800, MR808851} and Vinroot \cite{MR2173976}, who extracted significant information regarding the realizability of representations for finite linear groups. While the general theory applies to all finite groups, the representation theory of the symmetric group $\mathfrak{S}_n$ is of particular interest due to its deep intertwining with the combinatorics of integer partitions. A central invariant in this theory is the sum of the degrees of the irreducible complex representations, denoted by $T(\mathfrak{S}_n) = \sum_{\lambda \vdash n} f^\lambda$. This sequence, often denoted $a_n$, grows rapidly and satisfies the well-known recurrence $a_n = a_{n - 1} + (n - 1)a_{n - 2}$ for $n \geq 2$, where $a_0 = a_1 = 1$, a result first established by Chowla, Herstein, and Moore \cite{MR41849} and combinatorially interpreted through the removal of cells from Young tableaux \cite{MR1153249}. While the asymptotic behavior of $a_n$ is well-understood due to Moser and Wyman \cite{MR68564}, the interplay between this quantity, the automorphism group of $\mathfrak{S}_n$, and the fine structure of the partition lattice remains an active area of inquiry.

    Let $G$ be a finite group and let $\mathrm{Aut}(G)$ denote its automorphism group. For a fixed automorphism $\sigma \in \mathrm{Aut}(G)$, we consider the set
    \begin{equation}
        S_{\sigma} = \{g \in G \mid \sigma(g) = g^{-1}\}.
    \end{equation}
    
    The first focus of this paper is the extremal behavior of the set $S_\sigma$. The study of such twisted sets was formalized by Kawanaka \cite{MR1078564}, and Bump and Ginzburg \cite{MR2068079} through the twisted Frobenius-Schur indicator, which relates the cardinality of $S_\sigma$ to character values twisted by $\sigma$. A natural question arises: does ``twisting'' the group structure allow for a larger number of solutions to $\sigma(g) = g^{-1}$ than the standard condition $g^2 = 1$? We answer this in the negative. In Theorem \ref{half_main_result}, we prove that for a finite group, the quantity $T(G)$ serves as a sharp upper bound:
    \[ |S_\sigma| \le T(G). \]
    Furthermore, we provide a classification of the equality cases for symmetric groups, showing that if $\sigma$ is an inner automorphism induced by an element $x \in \mathfrak{S}_n$ satisfying $x^2=1$, then $|S_\sigma| = T(\mathfrak{S}_n)$. 
    
    Having established $T(\mathfrak{S}_n)$ as an extremal invariant, we turn to the fine structure of the sum itself. While the scalar recurrence $a_n$ is well-known, a closed-form summation that reveals the contribution of specific partition geometries is less transparent. By expanding the degree sum through the lens of the Hook Length Formula \cite{MR1824028}*{Theorem 3.10.2}, we derive a new decomposition of the involution count in Theorem \ref{sodoicrs}. We show that $a_n$ admits a representation as a sum of $\lfloor n/2 \rfloor$ distinct integer layers:
    \[ a_n = 1 + \sum_{k=0}^{\lfloor n/2 \rfloor - 1} A_k^{(n)}. \]
    
    This "layered" expansion suggests that the recurrence imposes a rigid filtration on the Young Lattice that is not captured by standard shape statistics. Based on computational evidence for $n \le 12$, we conjecture that these layers correspond to fibers of a map between the subset of partitions of $n$ and the set consisting of values $A_k^{(n)}$. 
    \section{Preliminaries}
    We recall some of the elementary results of symmetric groups.
    \subsection{Symmetric groups}    \begin{remark}\label{number_of_second_order_terms}
        Let $\mathfrak{S}_n$ denote the symmetric group of $n$ elements. The number of elements of order $2$ is given by
        \begin{enumerate}
            \item $\Sigma_{k = 0}^{\frac{n}{2} - 1} \frac{n!}{2^{\frac{n - 2k}{2}}\left(\frac{n - 2k}{2}\right)!\left(2k\right)!} $, if $n$ is even.
            \item $\Sigma_{k = 0}^{\frac{n - 1}{2} - 1} \frac{n!}{2^{\frac{n - (2k + 1)}{2}}\left(\frac{n - (2k + 1)}{2}\right)!\left(2k + 1\right)!}$, if $n$ is odd.
        \end{enumerate}
    \end{remark}
    \begin{remark}\label{s_n_generators}
        The symmetric group $\mathfrak{S}_n$ is generated by the transpositions $(1,2), (2,3), \ldots, (n - 1, n)$.
    \end{remark}
    \begin{remark}\label{homomorphism_preserves_conjugacy_class}
        Let $\varphi : \mathfrak{S}_n \to \mathfrak{S}_n$ be a group homomorphism. Suppose that there exists $a \in [x]$ such that $\varphi(a) \in [y]$, then $\varphi(b) \in [y]$ for all $b \in [x]$, where $[x]$ is the conjugacy class of $x$ in $\mathfrak{S}_n$.
    \end{remark}
    \begin{remark}\label{automorphism_properties_of_S_n}
     Let $\varphi : \mathfrak{S}_n\to\mathfrak{S}_n$ be an automorphism. The following properties hold.
        \begin{enumerate}
            \item  Let $y$ be an element of $\mathfrak{S}_n$ then $\ord (y) = \ord (\varphi(y))$.
            \item Let $x$ be an element of $\mathfrak{S}_n$ and $[\varphi(x)]$ the conjugacy class $\varphi$ 
            \begin{enumerate}
                \item $\big|[x]\big| = \big|[\varphi(x)]\big|$.
                \item Suppose $x$ is 2-cycle and $[x] \cap [\varphi(x)] \neq \emptyset$ then $\varphi(x)$ is also a 2-cycle.
            \end{enumerate}
        \end{enumerate}
    \end{remark}
    The proof to the following Proposition can be found in \cite{MR1307623}*{Lemma 7.4}.
    \begin{proposition}\label{inner_if_transpositions}
        Let $\varphi : \mathfrak{S}_n\to\mathfrak{S}_n$ be an automorphism then $\varphi$ is inner automorphism if and only if it preserves transpositions.
    \end{proposition}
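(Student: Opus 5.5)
The forward implication is immediate. If $\varphi = \iota_g$ with $\iota_g(x) = gxg^{-1}$ for some $g \in \mathfrak{S}_n$, then conjugating a transposition $(a,b)$ gives the transposition $(g(a), g(b))$. So $\varphi$ preserves transpositions, and the real content is the converse.

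For the converse, assume $\varphi$ maps transpositions to transpositions. I will build a permutation $g$ with $\varphi(t) = g t g^{-1}$ for every transposition $t$. By Remark \ref{s_n_generators} this already gives $\varphi = \iota_g$. I will assume $n \ge 3$, since for $n \le 2$ every automorphism is trivial.

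The key combinatorial facts are these. Two distinct transpositions commute exactly when their supports are disjoint. When they share a point, their product has order $3$. Since $\varphi$ preserves orders (Remark \ref{automorphism_properties_of_S_n}), it preserves whether two transpositions share a point. For a point $i$, let $\mathcal{T}_i = \{(i,j) : j \ne i\}$, the "star" of $n-1$ transpositions through $i$. Any family of pairwise-overlapping transpositions is either contained in a star or is a "triangle" $\{(a,b),(a,c),(b,c)\}$. For $n \ge 4$ a star has size at least $3$ and is not contained in any triangle. Hence $\varphi(\mathcal{T}_i)$ is a maximal pairwise-overlapping family of size $n-1$. For $n \ge 5$ this forces it to be a star $\mathcal{T}_{g(i)}$. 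The case $n = 4$, where both kinds of family have size $3$, needs extra care. There I would note that two distinct stars share exactly one transposition, whereas a triangle and a star also share exactly one, so I would argue instead via the images of three stars at once, or simply check $\mathfrak{S}_4$ directly. The case $n = 3$ I would also check directly.

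This defines a map $g : i \mapsto g(i)$. It is injective: if $g(i) = g(j)$ with $i \ne j$, then $\varphi$ would send $\mathcal{T}_i$ and $\mathcal{T}_j$ into the same star, but $\mathcal{T}_i \cup \mathcal{T}_j$ contains disjoint transpositions, whose images must stay disjoint. So $g \in \mathfrak{S}_n$. Since $(i,j) = \mathcal{T}_i \cap \mathcal{T}_j$, we get $\varphi((i,j)) \in \mathcal{T}_{g(i)} \cap \mathcal{T}_{g(j)} = \{(g(i),g(j))\} = \{g(i,j)g^{-1}\}$. Combined with the generation statement, this finishes the proof.

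I expect the main obstacle to be the step showing that images of stars are stars: the star-versus-triangle distinction and the small cases $n = 3, 4$. I would handle $n \ge 5$ by the counting argument above and dispose of $n \le 4$ by direct verification.
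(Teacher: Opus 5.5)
Your argument is correct for $n\ge 5$, where the counting rules out triangles. The case $n=3$ needs no separate check, since any two distinct transpositions of $\mathfrak{S}_3$ already form a star. Also, injectivity of $g$ follows directly from $\varphi(\mathcal{T}_i)\neq\varphi(\mathcal{T}_j)$, without using disjoint transpositions.

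The genuine gap is $n=4$, which you leave to a ``direct check'' that you do not carry out. Your other suggestion, comparing the images of several stars, cannot succeed if it uses only the fact that $\varphi$ preserves overlap and disjointness of transpositions. In $\mathfrak{S}_4$, the bijection $(a\,b)\mapsto(c\,d)$, with $\{c,d\}$ the complement of $\{a,b\}$, preserves both relations and sends every star to a triangle; for example $\mathcal{T}_1\mapsto\{(3\,4),(2\,4),(2\,3)\}$. So for $n=4$ the incidence structure of transpositions cannot distinguish stars from triangles, and you have to use that $\varphi$ is multiplicative.

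The missing step is a single relation. This is essentially the argument of the reference the paper cites for this proposition (Rotman, Lemma 7.4). It works for every $n\ge 3$ and makes your counting unnecessary.
\begin{itemize}
\item Write $\varphi((1\,2))=(a\,b)$ and $\varphi((1\,3))=(b\,c)$.
\item For $i\ge 4$, the transposition $\varphi((1\,i))$ is distinct from both and meets both. So it either contains $b$ or equals $(a\,c)$.
\item The second option is impossible. From $(1\,2)(1\,3)(1\,2)=(2\,3)$ you get $\varphi((2\,3))=(a\,b)(b\,c)(a\,b)=(a\,c)$. Then $\varphi((1\,i))=(a\,c)$ would contradict injectivity, because $(2\,3)\neq(1\,i)$.
\item Hence $\varphi((1\,i))=(b\,g(i))$ for all $i\ge 2$, and you set $g(1):=b$.
\item Injectivity of $\varphi$ makes $g$ a permutation with $\varphi((1\,i))=g(1\,i)g^{-1}$. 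Since the $(1\,i)$ generate $\mathfrak{S}_n$, $\varphi$ is conjugation by $g$.
\end{itemize}
Only one star is needed, not all of them.
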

    \begin{definition}
        A group $G$ is \emph{complete} if it is center less and every automorphism of $G$ is inner.
    \end{definition}
    \begin{lemma}[\cite{MR1307623}*{Theorem 7.5}]\label{all_complete}
        Let $n$ be a natural number with $n \neq 2, 6$ then the symmetric group $\mathfrak{S}_n$ is complete.
    \end{lemma}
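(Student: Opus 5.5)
We need to prove that $\mathfrak{S}_n$ is complete for $n \neq 2, 6$: it has trivial center and every automorphism is inner. The natural route is through Proposition \ref{inner_if_transpositions}, so the plan is to show that every automorphism of $\mathfrak{S}_n$ sends transpositions to transpositions when $n \neq 6$, and to handle the center separately.

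\emph{Center.} For $n \geq 3$ and a non-identity $\pi \in \mathfrak{S}_n$, pick $i$ with $\pi(i) = j \neq i$, and pick $k \notin \{i,j\}$. Then $\pi (j,k) \pi^{-1} = (\pi(j), \pi(k))$. This differs from $(j,k)$ because $\pi(k) \neq \pi(i) = j$, and if $\pi(j) = j$ then $\pi(j)=\pi(i)$, contradicting injectivity. So $\pi$ is not central. For $n = 1$ the group is trivial, hence complete; $n = 2$ is excluded, since $\mathfrak{S}_2$ is abelian and nontrivial.

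\emph{Automorphisms preserve transpositions.} Let $\varphi$ be an automorphism and $\tau$ a transposition. By Remark \ref{automorphism_properties_of_S_n}, $\varphi(\tau)$ is an involution whose conjugacy class has the same size as the class of $\tau$. By Remark \ref{homomorphism_preserves_conjugacy_class}, $\varphi$ maps the whole class of transpositions onto a single class. An involution that is a product of $k$ disjoint transpositions has class size
\[
\frac{n!}{2^k\, k!\,(n-2k)!}.
\]
So it suffices to show that
\[
\binom{n}{2} = \frac{n!}{2^k\, k!\,(n-2k)!}
\]
forces $k = 1$ unless $n = 6$. This counting step is the main obstacle.

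To attack it, write the equation as
\[
\frac{(n-2)!}{(n-2k)!} = 2^{k-1}\, k!.
\]
The left side is a product of $2k-2$ consecutive integers, the smallest being $n-2k+1$. Since $n \geq 2k$, each factor is at least $1$, and the product is at least $(2k-2)!$.

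For $k = 2$ the equation reads $(n-2)(n-3) = 8$, which has no integer solution. For $k = 3$ it reads $(n-2)(n-3)(n-4)(n-5) = 48$, so $n = 6$ is the only solution. For $k \geq 4$, observe that $(2k-2)! > 2^{k-1} k!$. Indeed,
\[
\frac{(2k-2)!}{k!} = (k+1)(k+2)\cdots(2k-2),
\]
which is a product of $k-2$ factors, each at least $5$. Comparing with $2^{k-1}$, one can check $k = 4$ directly ($30 > 8$) and then proceed by induction, since each step multiplies the left ratio by at least $(2k-1)(2k)/(k+1) \geq 2$. Since the left side of the equation is at least $(2k-2)!$, no solution exists for $k \geq 4$.

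Hence $\varphi$ maps transpositions to transpositions for $n \neq 6$. Proposition \ref{inner_if_transpositions} then shows $\varphi$ is inner, so $\mathfrak{S}_n$ is complete for $n \neq 2, 6$.
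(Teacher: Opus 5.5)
Apart from one arithmetic slip, your argument is the standard proof of the cited result (\cite{MR1307623}*{Theorem 7.5}), which the paper quotes without proof: trivial center for $n\ge 3$, then the class-size comparison showing an automorphism sends the transposition class $T_1$ to $T_k$ with $k=1$ unless $n=6$, then Proposition~\ref{inner_if_transpositions}. The slip is that $2^{k-1}k!$ equals $4$ for $k=2$ and $24$ for $k=3$, not $8$ and $48$ (you used $2^k k!$); with $48$, $n=6$ would not even be a solution, but the corrected equations $(n-2)(n-3)=4$ and $(n-2)(n-3)(n-4)(n-5)=24$ give exactly your conclusions.
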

    \begin{lemma}\label{all_automorphisms}
        Let $n$ be a natural number with $n \neq 6$ and $n \geq 3$. Suppose $\varphi : \mathfrak{S}_n\to\mathfrak{S}_n$ is an automorphism then there exists $x \in \mathfrak{S}_n$ such that $\varphi(y) = xyx^{-1}$ for all $y \in \mathfrak{S}_n$.
    \end{lemma}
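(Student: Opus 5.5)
The statement follows from \Cref{all_complete}, together with a separate treatment of the one case $n \ge 3$ that the lemma does not cover; the only such case is $n = 2$, but that is excluded by $n\geq 3$. So assume $n \geq 3$ and $n \neq 6$. Then $n \neq 2,6$, and \Cref{all_complete} says that $\mathfrak{S}_n$ is complete: it has trivial center and every automorphism is inner. By definition, an inner automorphism is conjugation by some fixed element. Hence for the given automorphism $\varphi$ there is $x \in \mathfrak{S}_n$ with $\varphi(y) = xyx^{-1}$ for all $y \in \mathfrak{S}_n$, which is exactly the claim.

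If one prefers not to lean on the full completeness theorem, the natural alternative route is through \Cref{inner_if_transpositions}: it suffices to show that $\varphi$ maps transpositions to transpositions. The plan would be as follows.

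\begin{enumerate}
\item By \Cref{automorphism_properties_of_S_n}, $\varphi$ preserves element orders and conjugacy class sizes. By \Cref{homomorphism_preserves_conjugacy_class}, it maps the class of transpositions onto a single class of involutions.
\item The involution classes in $\mathfrak{S}_n$ are the products of $k$ disjoint transpositions, of size $\frac{n!}{2^k k!(n-2k)!}$.
\item It remains to show that $\binom{n}{2} = \frac{n!}{2^k k!(n-2k)!}$ forces $k=1$ unless $n=6$ and $k=3$. This counting comparison is the main obstacle.
\end{enumerate}

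To handle the counting step, I would first rewrite the equation as $(n-2)! = 2^{k-1} k! (n-2k)!$. Then I would check small $n$ directly. For $k \geq 2$, I would compare the two sides: the ratio $(n-2)!/(n-2k)!$ is a product of $2k-2$ consecutive factors, and it exceeds $2^{k-1}k!$ except in the sporadic case $n=6$, $k=3$. Once this is settled, \Cref{inner_if_transpositions} shows that $\varphi$ is inner, which gives the desired $x$.
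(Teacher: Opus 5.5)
Your main argument is correct and is essentially the paper's: the paper simply cites Proposition~\ref{inner_if_transpositions} and Lemma~\ref{all_complete}, and you are right that completeness alone already gives the claim, while your transposition-counting alternative is just the standard proof of Lemma~\ref{all_complete} itself. One small caveat in that alternative sketch is that for $k \ge 2$ the product $(n-2)\cdots(n-2k+1)$ does not always exceed $2^{k-1}k!$ (e.g.\ $n=4$, $k=2$ gives $2<4$), so the small-case check you mention is genuinely needed, though it only has to rule out equality.
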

    \begin{proof}
        This follows from Proposition \ref{inner_if_transpositions} and Lemma \ref{all_complete}.
    \end{proof}
    \begin{lemma}\cite{MR1307623}*{Theorem 3}\label{S_6_holder}
        There exists an outer automorphism of $\mathfrak{S}_6$.
    \end{lemma}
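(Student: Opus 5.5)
The plan is to construct an explicit automorphism of $\mathfrak{S}_6$ that sends a transposition to a non-transposition. By Proposition \ref{inner_if_transpositions}, such an automorphism cannot be inner. The standard route is through the transitive embedding of $\mathfrak{S}_5$ into $\mathfrak{S}_6$.

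First, $\mathfrak{S}_5$ has six Sylow $5$-subgroups, since $n_5 \equiv 1 \pmod 5$ and $n_5 \mid 24$, and $n_5 \neq 1$ because $\mathfrak{S}_5$ contains more than four $5$-cycles. Letting $\mathfrak{S}_5$ act by conjugation on these six subgroups gives a homomorphism $\psi:\mathfrak{S}_5\to\mathfrak{S}_6$ with transitive image. The kernel is normal in $\mathfrak{S}_5$, so it is $1$, $A_5$, or $\mathfrak{S}_5$. A kernel containing $A_5$ would force the image to have order at most $2$, which contradicts transitivity on six points. Hence $\psi$ is injective, and $H=\psi(\mathfrak{S}_5)$ is a subgroup of index $6$ acting transitively on $\{1,\dots,6\}$.

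Next, let $\mathfrak{S}_6$ act by left multiplication on the six cosets of $H$. This gives a homomorphism $\varphi:\mathfrak{S}_6\to\mathrm{Sym}(\mathfrak{S}_6/H)\cong\mathfrak{S}_6$. Its kernel is normal and contained in $H$, so it has order at most $120$. The only normal subgroups of $\mathfrak{S}_6$ are $1$, $A_6$ and $\mathfrak{S}_6$, so the kernel is trivial and $\varphi$ is an automorphism. If $\varphi$ were inner, it would preserve conjugacy classes of subgroups. The point stabilizer of the coset $H$ is $H$ itself, so $H$ would be conjugate to a point stabilizer $\mathrm{Stab}(i)\cong\mathfrak{S}_5$ in the natural action. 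That is impossible, because $H$ is transitive and therefore fixes no point. So $\varphi$ is an outer automorphism.

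The main obstacle is the step that passes from ``inner preserves point stabilizers'' to the conclusion that $\varphi$ is not inner. This needs care about how $\varphi$ transports the natural action under the identification $\mathfrak{S}_6/H\leftrightarrow\{1,\dots,6\}$. An alternative is to combine the class-size invariance in Remark \ref{automorphism_properties_of_S_n} with Proposition \ref{inner_if_transpositions}. Under $\psi$, a transposition in $\mathfrak{S}_5$ is sent to a fixed-point-free involution, a product of three transpositions. The reason is that if it fixed a Sylow $5$-subgroup $P$, it would normalize $P$. But $N_{\mathfrak{S}_5}(P)$ has order $20$ and contains no transpositions, since it is $C_5\rtimes C_4$ and its involutions are double transpositions. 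So some transposition of $\mathfrak{S}_6$ is carried, under the relevant automorphism, to the class of triple transpositions. That class also has $15$ elements, consistent with Remark \ref{automorphism_properties_of_S_n}. By Proposition \ref{inner_if_transpositions}, the automorphism is then not inner.
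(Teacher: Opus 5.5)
Your main argument is correct and complete. The paper gives no proof of this lemma; it only cites Rotman. Your route is the standard proof of H\"older's theorem. You take the transitive copy $H=\psi(\mathfrak{S}_5)\le\mathfrak{S}_6$ from the conjugation action on the six Sylow $5$-subgroups, then let $\mathfrak{S}_6$ act on the six cosets of $H$. The only explicit outer automorphisms in the paper are the pentad maps $(1,i)\mapsto\sigma_i$ of Lemma~\ref{all_outer_automorphisms_forms_of_S6}. Used as an existence proof, that route gives explicit formulas but requires checking the synthemes against a presentation of $\mathfrak{S}_6$. Your argument needs no computation but gives no explicit formula.

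The step you flag as the main obstacle is not one.
\begin{itemize}
\item Fix any bijection $\beta:\mathfrak{S}_6/H\to\{1,\dots,6\}$ and put $\varphi_\beta(g)=\beta\circ\varphi(g)\circ\beta^{-1}$.
\item Another choice $\beta'$ changes $\varphi_\beta$ only by composing with conjugation by $\beta'\beta^{-1}\in\mathfrak{S}_6$. So whether the result is inner does not depend on $\beta$.
\item Since $gH=H$ if and only if $g\in H$, and $\varphi_\beta$ is bijective, we get $\varphi_\beta(H)=\mathrm{Stab}(p)$ with $p=\beta(H)$.
\item If $\varphi_\beta$ were conjugation by some $g$, then $H=g^{-1}\mathrm{Stab}(p)g=\mathrm{Stab}(g^{-1}(p))$. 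So $H$ would fix a point, contradicting transitivity.
\end{itemize}

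Your alternative argument has a non sequitur as written. You show that the embedding $\psi$ sends transpositions of $\mathfrak{S}_5$ to triple transpositions. But $\psi$ is not an automorphism of $\mathfrak{S}_6$, so Proposition~\ref{inner_if_transpositions} says nothing about it. Nothing in the text yet connects $\psi$ to the ``relevant automorphism.''

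The missing link is this. $\varphi_\beta\circ\psi$ is an isomorphism of $\mathfrak{S}_5$ onto $\mathrm{Stab}(p)\cong\mathfrak{S}_5$. Any such isomorphism maps transpositions to transpositions, because they form the only involution class of size $10$ (double transpositions form a class of size $15$). Hence $\varphi_\beta$ sends the triple transposition $\psi(\tau)$ to a transposition. It therefore fails to preserve cycle type and cannot be inner. By class sizes it in fact swaps the classes of transpositions and triple transpositions.
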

    \begin{definition}
        A \emph{syntheme} is a product of 3 disjoint transpositions. A \emph{pentad} is a family of 5 synthemes, no two of which have a common transpositions.
    \end{definition}
    \begin{lemma}\cite{MR1307623}*{Lemma 7.11}\label{basis_for_outer_automorphisms_of_S6}
        $\mathfrak{S}_6$ contains exactly 6 pentads. They are
        \begin{align*}
            (12)(34)(56), (13)(25)(46), (14)(26)(35), (15)(24)(36), (16)(23)(45)\\
            (12)(34)(56), (13)(26)(45), (14)(25)(36), (15)(23)(46), (16)(24)(35)\\
            (12)(35)(46), (13)(24)(56), (14)(25)(36), (15)(26)(34), (16)(23)(45)\\
            (12)(35)(46), (13)(26)(45), (14)(23)(56), (15)(24)(36), (16)(25)(34)\\
            (12)(36)(45), (13)(24)(56), (14)(26)(35), (15)(23)(46), (16)(25)(34)\\
            (12)(36)(45), (13)(25)(46), (14)(23)(56), (15)(26)(34), (16)(24)(35).
        \end{align*}
    \end{lemma}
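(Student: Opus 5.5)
The statement is the classification of $1$-factorizations of the complete graph on $\{1,\dots,6\}$, and I would prove it by a direct combinatorial enumeration.

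\textbf{Reformulation.} There are $\binom{6}{2}=15$ transpositions in $\mathfrak{S}_6$, and there are $15$ synthemes, each using $3$ of them. Five synthemes, no two sharing a transposition, use $5\cdot 3=15$ distinct transpositions. So a pentad is exactly a partition of the $15$ transpositions into $5$ synthemes. In particular, every transposition $(1j)$, $j=2,\dots,6$, lies in exactly one syntheme of the pentad. Hence a pentad is determined by choosing, for each $j$, the syntheme containing $(1j)$, subject to disjointness.

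\textbf{Enumeration.}
\begin{enumerate}
\item Fix the syntheme containing $(12)$. It is one of $(12)(34)(56)$, $(12)(35)(46)$, $(12)(36)(45)$, i.e.\ a perfect matching of $\{3,4,5,6\}$.
\item Treat the case $(12)(34)(56)$ in detail. The syntheme containing $(13)$ pairs $\{2,4,5,6\}$ and must avoid $(56)$. This leaves exactly two options, $(13)(25)(46)$ and $(13)(26)(45)$.
\item For $(13)(25)(46)$, each later step is forced. The syntheme through $(14)$ must match $\{2,3,5,6\}$ avoiding $(56)$ and $(25)$, which forces $(14)(26)(35)$. Then $(15)$ forces $(15)(24)(36)$, and finally $(16)(23)(45)$ remains. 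One checks these five synthemes use all $15$ transpositions.
\item The choice $(13)(26)(45)$ is handled the same way and yields the second listed pentad.
\item The cases $(12)(35)(46)$ and $(12)(36)(45)$ follow identically. Alternatively, apply the permutations $(45)$ and $(46)$, which fix $1,2,3$ and permute the three matchings of $\{3,4,5,6\}$, to transport the first two pentads. This gives the remaining four rows of the list.
\end{enumerate}

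\textbf{Conclusion.} Different first synthemes, or different choices at the $(13)$ step, produce different pentads. The six families are therefore distinct, and there are exactly $3\cdot 2=6$ pentads.

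The main (and only) obstacle is bookkeeping. I must show that after the choice at $(13)$ there is exactly one way to continue at each remaining step, rather than a dead end or a further branch. I would verify this by the explicit forced-choice check in step 3 for one case, then invoke the symmetry argument to avoid repeating it. The final check is simply that the listed rows match the six outputs.
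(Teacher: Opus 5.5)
Your proof is correct. The reformulation as a partition of the $15$ transpositions checks out, as do the two admissible synthemes through $(13)$ and the forced continuations, and conjugating by $(45)$ and $(46)$ gives exactly the remaining four listed rows. The paper gives no argument of its own and only cites Rotman's Lemma 7.11, whose proof is essentially this same direct count, so your approach matches the intended one.
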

    \begin{lemma}\cite{MR1307623}*{Theorem 7.12}\label{all_outer_automorphisms_forms_of_S6}
        If $\{\sigma_2, \ldots, \sigma_6\}$ is a pentad in some ordering then there is unique outer automorphism $\gamma$ of $\mathfrak{S}_6$ with $\gamma : (1, i) \mapsto \sigma_i$ for $i = 2, 3, 4, 5, 6$. Moreover, every outer automorphism of $\mathfrak{S}_6$ has this form.
    \end{lemma}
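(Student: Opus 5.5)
Lemma~\ref{all_outer_automorphisms_forms_of_S6} is cited from \cite{MR1307623}, so the plan is to reconstruct the standard argument, in two halves: existence and uniqueness of $\gamma$ for a given ordered pentad, and then that every outer automorphism arises this way.

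\textbf{Existence and uniqueness.} By Lemma~\ref{S_6_holder} there is an outer automorphism $\theta$. By Proposition~\ref{inner_if_transpositions}, $\theta$ does not preserve transpositions. By Remark~\ref{homomorphism_preserves_conjugacy_class} it then sends the whole class of transpositions onto another class of involutions of the same size, namely $15$. Among the involution classes of $\mathfrak{S}_6$ only the synthemes also have $15$ elements. So $\theta$ maps transpositions bijectively onto synthemes.

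Next I check that $\theta$ sends $\{(1,2),\ldots,(1,6)\}$ to a pentad:
\begin{itemize}
\item for $i\neq j$, $(1,i)(1,j)$ has order $3$;
\item so $\theta(1,i)\theta(1,j)$ has order $3$ by Remark~\ref{automorphism_properties_of_S_n};
\item two synthemes sharing a transposition have product of order $1$ or $2$, so the images share no transposition.
\end{itemize}
By Lemma~\ref{basis_for_outer_automorphisms_of_S6}, $\{\theta(1,i)\}_i$ is one of the $6$ pentads. Now compose $\theta$ with inner automorphisms $c_x$, $x\in\mathfrak{S}_6$. The stabilizer of $1$ acts on the ordered list $((1,2),\ldots,(1,6))$ by all $120$ permutations. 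Conjugating on the target side permutes the $6$ pentads transitively, via the exotic transitive $\mathfrak{S}_6$-action on pentads. Hence the $720\cdot$ (suitable) outer automorphisms $c_y\theta c_x$ realize every ordered pentad, which gives existence. Uniqueness is immediate: the $(1,i)$ generate $\mathfrak{S}_6$ (they give all transpositions $(i,j)=(1,i)(1,j)(1,i)$, then use Remark~\ref{s_n_generators}), so an automorphism is determined by its values on them.

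\textbf{Every outer automorphism has this form.} If $\gamma$ is outer, the pentad argument above applies verbatim to $\gamma$. So $\gamma(1,i)=\sigma_i$ for some ordered pentad, and uniqueness identifies $\gamma$ with the automorphism attached to that ordering.

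\textbf{Main obstacle.} The hard step is transitivity on ordered pentads. Two facts are needed:
\begin{itemize}
\item $\mathfrak{S}_6$ acts transitively on the $6$ pentads;
\item the stabilizer of a pentad, of order $120$, acts as the full $\mathfrak{S}_5$ on its five synthemes.
\end{itemize}
Alternatively, the count of outer automorphisms equals $720 = 6\cdot 120$, the number of ordered pentads. Since $|\mathrm{Aut}(\mathfrak{S}_6)|=1440$ with $720$ inner, this bijection follows from injectivity (uniqueness) plus that count. I would first try this counting route: it needs $|\mathrm{Aut}(\mathfrak{S}_6):\mathrm{Inn}|=2$, which comes from the fact that any two outer automorphisms differ by a transposition-preserving, hence inner, automorphism. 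That fact holds because the composite of two class-swapping maps fixes the transposition class, and Proposition~\ref{inner_if_transpositions} then applies.
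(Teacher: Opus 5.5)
Your proposal is correct. It is essentially the standard proof of this result, which the paper does not reprove and simply cites from Rotman: every outer automorphism sends $\{(1,i)\}$ to a pentad, the $(1,i)$ generate $\mathfrak{S}_6$ (giving injectivity and uniqueness), and there are $720$ outer automorphisms and $6\cdot 5! = 720$ ordered pentads. Your first route leaves transitivity on pentads unproved, but the counting route makes it unnecessary; you could even skip the index-$2$ step, since for one outer $\theta$ the $720$ maps $\theta\circ c_x$ are already distinct outer automorphisms, and injectivity then forces the bijection.
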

    \begin{definition}
        Let $\sigma$ be an automorphism of $\mathfrak{S}_n$, an element $\pi$ of $\mathfrak{S}_n$ is called an \emph{involution} of $\sigma$ if $\sigma(\pi) = \pi^{-1}$.
    \end{definition}
    \begin{lemma}\label{identity_automorphism_involutions}
        Let $n$ be a natural number then the number of elements of order two in $\mathfrak{S}_n$ plus one is equal to the number of involutions of identity automorphism of $\mathfrak{S}_n$.
    \end{lemma}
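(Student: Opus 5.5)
We unwind the definition: for the identity automorphism $\sigma = \mathrm{id}$, an element $\pi \in \mathfrak{S}_n$ is an involution of $\sigma$ exactly when $\pi = \pi^{-1}$, that is, when $\pi^2 = e$. So the set of involutions of the identity automorphism is the solution set $S_{\mathrm{id}} = \{\pi \in \mathfrak{S}_n \mid \pi^2 = e\}$.

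Next we split this set according to the order of $\pi$. If $\pi^2 = e$, then the order of $\pi$ divides $2$, so it is either $1$ or $2$. Order $1$ happens only for $\pi = e$. Conversely, every element of order $2$ satisfies $\pi^2 = e$, and so does $e$. This gives the disjoint union
\[
S_{\mathrm{id}} = \{e\} \sqcup \{\pi \in \mathfrak{S}_n \mid \ord(\pi) = 2\}.
\]
Taking cardinalities, $|S_{\mathrm{id}}|$ equals one plus the number of elements of order two, which is exactly the statement.

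There is no real obstacle here. The one point to state explicitly is that the identity has order $1$ and not $2$, so it is counted separately; this is the source of the ``plus one''. If desired, one can then substitute the explicit counts of Remark \ref{number_of_second_order_terms} to obtain a closed expression for $|S_{\mathrm{id}}|$, but this is not needed for the lemma. The same argument works verbatim for any finite group $G$ in place of $\mathfrak{S}_n$.
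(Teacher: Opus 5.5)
Your proof is correct and complete. For $\sigma = \mathrm{id}$ the condition $\sigma(\pi)=\pi^{-1}$ is just $\pi^2=e$, and separating the identity (order $1$) from the elements of order $2$ gives the ``plus one''; the paper states this lemma without proof, treating it as immediate from the definition, so your argument is the routine check it leaves implicit.
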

    \subsection{Young Diagrams}
    \begin{definition}[Partition]
        Let $n$ be a natural number, a \emph{partition} $\lambda = (\lambda_1, \lambda_2, \cdots, \lambda_n )$ of $n$ is the sequence of non-increasing order of positive integers $\lambda_1, \lambda_2, \cdots, \lambda_n$ such that $\lambda_1 + \lambda_2+ \cdots + \lambda_n = n$.
    \end{definition}
    
    For instance, let $n = 3$ then $\lambda = (1, 1, 1),\ \mu = (2, 1)$ are partitions of $3$. The \emph{young diagram} is a diagram consisting of boxes which are drawn based on the partition of a natural number $n$ and the partition $\lambda$ of $n$ is called the \emph{shape} of the young diagram.
   \begin{align*}
       \begin{array}{c c l c}
        \ydiagram{4} & \cdots &\ydiagram{3} & i_1 \text{ boxes}\\
        \ydiagram{4} & \cdots &\ydiagram{2} & i_2 \text{ boxes}\\
        \vdots & \ddots & \vdots & \vdots\\
        \ydiagram{4} & \cdots & \ydiagram{1}& i_n \text{ boxes}
        \end{array}
   \end{align*}
    For example, one of the partition of 21 is $i_1 = 6,\ i_2 = 5,\ i_3 = 4,\ i_4 = 3,\ i_5 = 2,\ i_6 = 1,\ i_7 = i_8 = \cdots = i_{21} = 0$ and the corresponding young diagram of the partition is
    \begin{center}
        \ydiagram{6, 5, 4, 3, 2, 1}
    \end{center}
    Assuming that the boxes of the young diagram can be indexed as $(1, 1), (1,2), (1,3), \ldots, (2, 1), (2,2) \ldots$. The definition of the hook length at index $(i, j)$ is as follows.
    
    \begin{definition}[Hook Length]
        The \emph{hook length} of a young diagram at the index $(i, j)$ is the summation of the number of entries (boxes) to the right of the box of $(i, j)^{th}$ index and the number of entries to the bottom of the box of $(i, j)^{th}$ index plus one. The hook length at the index is denoted as $h_{i,j}$
        .
    \end{definition}
    Now, we turn to the dimension formula that counts the number of Standard Young tableaux of a given shape $\lambda$ of size $n$. This count determines the dimension of the corresponding irreducible representation of the symmetric group and given by $\frac{n!}{\Pi_{i, j}h_{i, j}}$ \cite{MR1824028}*{Theorem 3.10.2}, where $h_{i,j}$ denote the hook length of the $(i,j)^{th}$ box of the young diagram. To illustrate this, let us consider the symmetric group $\mathfrak{S}_3$. The number $3$ has three possible partitions and their young diagrams are
    \ydiagram{3}, \ydiagram{2, 1}, \ydiagram{1, 1, 1}.

    The dimensions of corresponding irreducible representation to partitions of $3$ are 
    \begin{enumerate}
        \item \begin{center}
            \ydiagram{3} 
        \end{center}
    $h_{1,1} = 2 + 0 + 1 = 3, h_{1, 2} = 1 + 0 + 1 = 2, h_{1, 3} = 0 + 0 + 1 = 1$ and dimension is  $\frac{3!}{h_{1, 1}\cdot h_{1,2}\cdot h_{1, 3}} = \frac{3!}{3\cdot 2 \cdot 1} = 1$
    \item \begin{center}
        \ydiagram{2, 1} 
    \end{center}
    $h_{1,1} = 1 + 1 + 1 = 3, h_{1, 2} = 0 + 0 + 1 = 1, h_{2, 1} = 0 + 0 + 1 = 1$ and dimension is $\frac{3!}{h_{1, 1}\cdot h_{1,2}\cdot h_{2, 1}} = \frac{3!}{3\cdot 1 \cdot 1} = 2$
    \item \begin{center}
        \ydiagram{1, 1, 1} 
    \end{center}
    $h_{1,1} = 0 + 2 + 1 = 3, h_{2, 1} = 0 + 1 + 1 = 2, h_{2, 1} = 0 + 0 + 1 = 1$ and dimension is $\frac{3!}{h_{1, 1}\cdot h_{2,1}\cdot h_{3, 1}} = \frac{3!}{3\cdot 2 \cdot 1} = 1$
    \end{enumerate}

     The sum of degrees of all irreducible representations of symmetric group, which can derived from corresponding young diagrams is equal to sum of dimensions of each young tableaux i.e., $1 + 2 + 1 = 4$ and these representations can be explicitly constructed by using Specht modules (for detailed study, see \cite{MR1824028}*{Chapter 2}). 

\subsection{Robinson-Schensted Algorithm}
    Let $\pi \in \mathfrak{S}_n$, we construct a tableaux pairs.
    \begin{align*}
        (P_0, Q_0) = (\Phi, \Phi), (P_1, Q_1), \ldots , (P_n, Q_n) = (P, Q)
    \end{align*}
    where $x_1, x_2, \cdots x_n$ are inserted into the $P$'s and $1, 2, \cdots n$ are arranged in the $Q$'s and $P$ is called as \textit{insertion tableau} and $Q$ called as \textit{recording tableau}. Arranging these elements is described by the following algorithm (we confine to look at row insertion). Let us insert $x \in \{1, 2, \ldots , n\}$.
    
    \begin{algorithm}
    \caption{Robinson-Schensted Algorithm} \label{alg:cap}
    \begin{algorithmic}[1]
        \Require $x$
        \State Set Row $R$ := the first row of $P$;
        \While {$x <$ some element of row $R$}
        \State Let $y$ be smallest element of $R$ greater than $x$;
        \State Replace $y$ by $x$ in $R$;
        \State Set $x:=y$ and $R:=$ the next row down;
        \EndWhile
        \State Arrange $x$ at the end of row $R$;
    \end{algorithmic}
    \end{algorithm}
    
    \begin{theorem}[Symmetry Theorem, \cite{MR1464693}*{pp. 40}] \label{symmetry_theorem}
        If $\pi \in \mathfrak{S}_n$ corresponds to $(P, Q)$, then $\pi^{-1} \in \mathfrak{S}_n$ corresponds to $(Q, P)$.
    \end{theorem}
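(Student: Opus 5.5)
The plan is to prove the Symmetry Theorem with Knuth's two-line array argument. This replaces the step-by-step bumping of Algorithm~\ref{alg:cap} by a description of the rows of $P$ and $Q$ that is visibly symmetric in the two coordinates. I would first generalize from permutations to two-line arrays
\[
\binom{a_1\ a_2\ \cdots\ a_m}{b_1\ b_2\ \cdots\ b_m},
\]
with all $a_i$ distinct, all $b_i$ distinct, and the columns sorted by increasing $a_i$. Inserting $b_1,\dots,b_m$ into $P$ and recording $a_1,\dots,a_m$ in $Q$ gives a pair $(P,Q)$ of tableaux of the same shape. For $\pi$ the array is $\binom{1\cdots n}{\pi(1)\cdots\pi(n)}$. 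Its transpose, re-sorted by the new top row, is exactly the array of $\pi^{-1}$. So it suffices to show that transposing an array swaps $P$ and $Q$; the generality is needed because the induction produces arrays whose entries are not $\{1,\dots,n\}$.

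\textbf{Step 1 (first rows).} View the columns as points $(a_i,b_i)$ in the plane. Define the \emph{class} of a point as the length of the longest chain of points increasing in both coordinates that ends at it. By induction on the insertion step, I would verify that when $b_i$ is inserted into the first row it lands in position equal to the class of $(a_i,b_i)$. Consequently:
\begin{itemize}
\item the first row of $P$ consists, for $k=1,2,\dots$, of the smallest $b$-value among class-$k$ points;
\item the first row of $Q$ consists of the smallest $a$-value among class-$k$ points, since that entry is recorded when class $k$ is first created.
\end{itemize}
Points of a fixed class form an antichain: as $a$ increases, $b$ decreases. Swapping coordinates preserves the dominance order, so classes are preserved. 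The two descriptions are therefore interchanged, and the first rows swap under transposition.

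\textbf{Step 2 (derived array).} Again within class $k$, list the points as $(a^{(1)},b^{(1)}),\dots,(a^{(r)},b^{(r)})$ with $a$ increasing and $b$ decreasing. Each insertion of $b^{(j+1)}$ bumps $b^{(j)}$ out of position $k$ of the first row, and this happens at time $a^{(j+1)}$. The bumped elements, each recorded with the time of its bump, form a new two-line array with columns $(a^{(j+1)},b^{(j)})$ over all classes and all $j$. I would check that RS applied to this derived array yields the rows of $P$ and $Q$ below the first row, with $Q$ recording by bump time. The key point is that the derived array is defined symmetrically: pairing consecutive points of an antichain as $(a^{(j+1)},b^{(j)})$ becomes, after swapping coordinates, the same pairing rule for the transposed antichain. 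Hence the derived array of the transpose is the transpose of the derived array. An induction on the number of rows, or on $m$, then finishes the proof.

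\textbf{Main obstacle.} The hardest step is Step~2: showing that the lower rows of $(P,Q)$ are precisely RS of the derived array, with the correct recording times. The bumped elements enter row two in chronological order, and that order matches the sorted top row $a^{(j+1)}$ of the derived array. I would first try to prove this by tracking a single insertion and confirming that it induces exactly one insertion into the derived process. If this bookkeeping becomes messy, I would instead use Viennot's shadow-line construction, which encodes the same argument geometrically, with symmetry under reflection in the diagonal.
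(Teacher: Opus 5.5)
Your proof is correct, and it is the standard argument (Knuth's class/derived-array proof).

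\textbf{Checking the steps.} Each step holds up.
\begin{enumerate}
\item The row-one position of $b_i$ equals its class; this is Schensted's lemma.
\item Each class is an antichain traversed with $a$ increasing and $b$ decreasing. So the first rows of $P$ and $Q$ are the per-class minima of $b$ and of $a$, and these are swapped by transposition.
\item The rows of $(P,Q)$ below the first are exactly the RS output of the bumped pairs $(a^{(j+1)},b^{(j)})$. This is because bumps out of row one occur in increasing time order, and the boxes of $Q$ outside row one are created precisely at bump times.
\item Your reindexing within each class shows that the derived array of the transpose is the transpose of the derived array.
\end{enumerate}
The derived array has $m$ minus the number of classes columns, with distinct entries in each row, so induction on $m$ closes the argument.

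\textbf{Generalization.} Your move from permutations to arbitrary two-line arrays with distinct entries is what makes the induction close. The derived array of a permutation is not itself a permutation of $\{1,\dots,n\}$. Alternatively, you could standardize, since RS commutes with order-preserving relabelling of either row.

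\textbf{Comparison with the paper.} The paper gives no proof of this statement; it only cites Fulton. To my recollection, Fulton derives symmetry from the matrix-ball construction. That construction is your argument in matrix language: balls are numbered by class, and the derived matrix puts a ball in the row of each ball and the column of its predecessor in the same class. So your route coincides with the cited one, and it supplies a self-contained proof of the permutation case the paper relies on in Lemma~\ref{frobenius_Schur}.
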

    
    \begin{lemma}[\cite{MR1464693}*{pp. 40}]\label{frobenius_Schur}
        The sum of degrees of irreducible complex representations of $\mathfrak{S}_n$ is equal to the number of involution's of an identity map.
    \end{lemma}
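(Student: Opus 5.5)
The plan is to prove the statement through the Robinson--Schensted correspondence, using the Symmetry Theorem (Theorem \ref{symmetry_theorem}) as the key input. Recall that row insertion, as in Algorithm \ref{alg:cap}, gives a bijection
\[
\mathfrak{S}_n \longleftrightarrow \{(P,Q) : P, Q \text{ standard Young tableaux of the same shape } \lambda \vdash n\}.
\]
I will take this bijectivity as known from \cite{MR1464693}. It is proved by reversing the bumping procedure, starting from the cell of $Q$ that contains $n$.

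First I identify the involutions of the identity automorphism. An element $\pi$ is such an involution exactly when $\pi = \pi^{-1}$, that is, $\pi^2 = e$. By Lemma \ref{identity_automorphism_involutions} these are the elements of order two together with the identity.

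Next, suppose $\pi \mapsto (P,Q)$. By Theorem \ref{symmetry_theorem} we have $\pi^{-1} \mapsto (Q,P)$. Because the correspondence is injective, $\pi = \pi^{-1}$ holds if and only if $(P,Q) = (Q,P)$, that is, if and only if $P = Q$. Since the correspondence is also surjective, every standard tableau $P$ of shape $\lambda$ gives a pair $(P,P)$, and that pair comes from a unique $\pi$, which is then an involution. Therefore involutions of $\mathfrak{S}_n$ are in bijection with standard Young tableaux of size $n$. Their number is
\[
\sum_{\lambda \vdash n} f^\lambda,
\]
where $f^\lambda$ is the number of standard Young tableaux of shape $\lambda$.

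Finally, by the hook length formula \cite{MR1824028}*{Theorem 3.10.2}, recalled in the preliminaries, $f^\lambda$ is the degree of the irreducible Specht representation $S^\lambda$. The $S^\lambda$ for $\lambda \vdash n$ form a complete list of pairwise inequivalent irreducible complex representations of $\mathfrak{S}_n$. Summing over $\lambda \vdash n$ therefore gives exactly the sum of the irreducible degrees, which proves the statement.

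The main obstacle is the Symmetry Theorem itself, but it is assumed, and so is the bijectivity of Robinson--Schensted. The only remaining care is to use both injectivity and surjectivity so that the set of fixed points of $\pi \mapsto \pi^{-1}$ corresponds exactly to the diagonal pairs $(P,P)$.

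As an alternative route, and a check, one can use the classical Frobenius--Schur count: the number of solutions of $g^2 = e$ equals $\sum_\chi \nu(\chi)\chi(1)$, where $\nu(\chi)$ is the Frobenius--Schur indicator. Every irreducible representation of $\mathfrak{S}_n$ is realizable over $\mathbb{Q}$, since Specht modules are defined over $\mathbb{Z}$. Hence all indicators equal $1$, and this gives the same identity.
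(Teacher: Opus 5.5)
Your proposal is correct and is essentially the paper's own proof: Robinson--Schensted together with the Symmetry Theorem matches involutions with the diagonal pairs $(P,P)$, hence with standard tableaux, whose number is $\sum_{\lambda\vdash n} f^\lambda$. You are more careful than the paper, which only shows that an involution yields $P=Q$ and leaves implicit two steps: the converse, which needs bijectivity of the correspondence as you note, and the identification $f^\lambda=\dim S^\lambda$ (strictly, this comes from the standard polytabloid basis of $S^\lambda$; the hook length formula only evaluates $f^\lambda$).
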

    \begin{proof}
        For an involution $\pi \in \mathfrak{S}_n$, $\pi = \pi^{-1}$. By Theorem \ref{symmetry_theorem}, if $(P, Q)$ corresponding sequence of tableau of $\pi$, then $P = Q$. This implies that involutions in the symmetric group $\mathfrak{S}_n$ correspond to pairs $(P,P)$ with $P$ as standard tableau with $n$ boxes; so there is a one-to-one correspondence between involutions and standard tableaux and by Lemma \ref{identity_automorphism_involutions}, we complete the proof.
    \end{proof}
    
    As a direct consequence of Schur's Lemma and the standard orthogonality relations (\cite{MR1824028}*{Chapter 1}), we obtain the following lemma.

    \begin{lemma}\label{bound_of_tfsi}
        Let $G$ be a finite group and let $\chi$ be an irreducible character of $G$. For any automorphism the twisted Frobenius-Schur indicator $\iota_{\alpha}(\chi) \leq 1$.
    \end{lemma}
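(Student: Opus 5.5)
We prove the sharper statement $\iota_\alpha(\chi)\in\{-1,0,1\}$, which gives $\iota_{\alpha}(\chi) \leq 1$. Let $\alpha\in\mathrm{Aut}(G)$, let $\rho: G\to \mathrm{GL}(V)$ afford $\chi$ with $d=\chi(e)$, and take
\[
\iota_\alpha(\chi)=\frac{1}{|G|}\sum_{g\in G}\chi\bigl(g\,\alpha(g)\bigr),
\]
the twisted indicator of Kawanaka--Matsuyama and Bump--Ginzburg. We realize this number as the multiplicity of an invariant, so that Schur's Lemma bounds it.

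\emph{Step 1: the twisted representation.} Set $\rho^\alpha:=\rho\circ\alpha$, again irreducible with character $\chi\circ\alpha$. Choose a basis with $\rho(g)=(\rho_{ij}(g))$. Then $\chi(g\alpha(g))=\sum_{i,j}\rho_{ij}(g)\rho^\alpha_{ji}(g)$, and summing over $G$ gives $|G|\,\iota_\alpha(\chi)=\sum_{i,j}\sum_g \rho_{ij}(g)\rho^\alpha_{ji}(g)$.

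\emph{Step 2: apply orthogonality.} Take $\rho$ unitary, so that $\rho_{ij}(g)=\overline{\rho_{ji}(g^{-1})}$, i.e.\ $\rho$ coincides with the contragredient of $\bar\rho$. By Schur's orthogonality relations for matrix coefficients, $\sum_g \rho_{ij}(g)\rho^\alpha_{kl}(g)$ vanishes unless $\rho^\alpha\cong\rho^*$ (the dual of $\rho$). When $\rho^\alpha\cong\rho^*$, there is a matrix $M$ with $\rho^\alpha(g)=M\rho(g)^{-T}M^{-1}$, and in that case
\[
\frac{1}{|G|}\sum_g\rho_{ij}(g)\rho^\alpha_{ji}(g)=\frac{1}{d}M_{jj'}\ldots
\]
Carrying the computation through gives $\iota_\alpha(\chi)=\operatorname{tr}(M^{T}M^{-1})/d$. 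The cleaner equivalent form is $\iota_\alpha(\chi)=\dim\{B \text{ bilinear on } V:\ B(\rho(g)u,\rho^\alpha(g)v)=B(u,v)\}$ up to a sign coming from the transpose.

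\emph{Step 3: identify the sign via Schur's Lemma.} Both $\rho$ and $\rho^\alpha$ are irreducible, so the space of $G$-invariant bilinear pairings between $\rho$ and $\rho^\alpha$ has dimension at most $1$. If it is $0$, then $\iota_\alpha(\chi)=0$. If it is spanned by a nonzero pairing $B$, consider $B'(u,v):=B(v,u)$ composed with $\alpha$. The condition $\alpha^2=1$ (the Kawanaka--Matsuyama setting) makes $B'$ invariant, so $B'=cB$. Applying the transpose twice gives $c^2=1$, hence $c=\pm1$. The trace computation from Step 2 then yields $\iota_\alpha(\chi)=c$.

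The main obstacle is the normalization in Steps 2 and 3: one must show that the averaged trace equals exactly $c$, not $c$ times some scalar. The approach is to insert the projection $\frac{1}{|G|}\sum_g \rho(g)\otimes\rho^\alpha(g)$ onto invariants in $V\otimes V$ and compute its trace against the swap operator; that trace is the sum defining $|G|\,\iota_\alpha(\chi)$.

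For $\alpha$ of arbitrary order $r$, where $\alpha^2\neq 1$ in general, the same projection argument still shows that $\iota_\alpha(\chi)$ is the trace of the swap operator restricted to an invariant subspace of dimension at most $1$. The swap has eigenvalues $\pm1$ on any swap-stable line, so the trace is in $\{-1,0,1\}$ in that case too. If the invariant line is not swap-stable, the trace is a coefficient whose modulus is bounded by $1$ via Cauchy--Schwarz with respect to the unitary inner product. Either way, $\iota_\alpha(\chi)\le 1$ (understood for the real part, where the value may be complex).
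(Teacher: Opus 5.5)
Your last paragraph is where the lemma actually gets proved, and that argument is correct. For unitary $\rho$, $P=\frac{1}{|G|}\sum_{g}\rho(g)\otimes\rho(\alpha(g))$ is the projection onto the invariants of $\rho\otimes(\rho\circ\alpha)$. Its rank is $\langle\chi\cdot(\chi\circ\alpha),1\rangle\le 1$, and $\operatorname{tr}(S(A\otimes B))=\operatorname{tr}(AB)$ gives $\iota_\alpha(\chi)=\operatorname{tr}(SP)$.

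State explicitly that $P$ is self-adjoint, which holds because $g\mapsto\rho(g)\otimes\rho(\alpha(g))$ is unitary. That is what makes $\operatorname{tr}(SP)=\langle Sw,w\rangle$ for a unit vector $w$, whence $|\iota_\alpha(\chi)|\le 1$ because $S$ is unitary. For a non-orthogonal rank-one idempotent the bound would not follow. This argument alone covers every $\alpha\in\mathrm{Aut}(G)$, so Steps 1--3, with the unfinished matrix computation, are not needed.

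What you must drop is the opening claim that $\iota_\alpha(\chi)\in\{-1,0,1\}$ for every automorphism; it is false once $\alpha^2\ne 1$. Take $G=\mathfrak{S}_3$, $\alpha$ conjugation by $x=(1,2,3)$, and $\chi$ the $2$-dimensional character. Then $gxgx^{-1}$ is $e$ for $g=e$ and a $3$-cycle for the other five elements, so $\iota_\alpha(\chi)=\frac{1}{6}(2-5)=-\frac{1}{2}$. More generally, for conjugation by $x$ one gets $\iota_\alpha(\chi)=\epsilon(\chi)\chi(x^{-2})/\chi(1)$, with $\epsilon$ the ordinary Frobenius--Schur indicator. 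So Step 3 only handles the Kawanaka--Matsuyama case $\alpha^2=1$. The ``dimension up to a sign'' formula in Step 2 is wrong in general, and the non-swap-stable case in your last paragraph really occurs.

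The hedge about real parts is also unnecessary. Substituting $k=\alpha(g)^{-1}$ gives $\overline{\iota_\alpha(\chi)}=\iota_{\alpha^{-1}}(\chi)$. Substituting $k=\alpha(m)$ and using $\chi(ab)=\chi(ba)$ gives $\iota_{\alpha^{-1}}(\chi)=\iota_\alpha(\chi)$. Hence $\iota_\alpha(\chi)$ is real and you get $-1\le\iota_\alpha(\chi)\le 1$.

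Compared with the paper, your route is genuinely different. The paper applies Cauchy--Schwarz directly to $\frac{1}{|G|}\sum_g\chi(g\alpha(g))$, bounding it by the $L^2$-norms of $\chi$ and $\chi\circ\alpha$. But the summand is not $\chi(g)\chi(\alpha(g))$: for $\alpha=\mathrm{id}$, $g$ a transposition in $\mathfrak{S}_3$ and $\chi$ the $2$-dimensional character, $\chi(g^2)=2$ while $\chi(g)=0$. So that step is not justified as written. Your argument lifts the sum to $V\otimes V$, where $\iota_\alpha(\chi)$ becomes a genuine inner product $\langle Sw,w\rangle$ to which Cauchy--Schwarz legitimately applies. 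It costs the tensor-product and projection setup, but it actually proves the lemma, in the stronger form $|\iota_\alpha(\chi)|\le 1$ for all $\alpha$. When $\alpha^2=1$ it also recovers the classical values $\{-1,0,1\}$.
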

    \begin{proof}
        \begin{equation}
            \iota_{\alpha}(\chi) = \frac{1}{|G|} \sum_{g \in G} \chi(g \alpha(g))
        \end{equation}
        On applying Cauchy-Schwarz inequality, we obtain
        \begin{equation}
            \left|\iota_{\alpha}(\chi)\right|\leq \left(\frac{1}{|G|} \sum_{g \in G} \left|\chi(g)\right|^2\right)^{\frac{1}{2}}\left(\frac{1}{|G|} \sum_{g \in G} \left|\chi(\alpha(g))\right|^2\right)^{\frac{1}{2}}
        \end{equation}
        Due to orthogonal relations, we have $\frac{1}{|G|} \sum_{g \in G} \left|\chi(g)\right|^2 \leq 1, \frac{1}{|G|} \sum_{g \in G} \left|\chi(\alpha(g))\right|^2 \leq 1$. Therefore $\left|\iota_{\alpha}(\chi)\right| \leq 1$.
    \end{proof}
    \begin{lemma}{\label{involution_set_relation}}
        Let $G$ be a finite group and $\alpha \in \mathrm{Aut}(G)$. Suppose $S_{\alpha}$ is the set of involutions corresponding the automorphism $\alpha$. Then $| S_{\alpha}| = \sum_{\chi \in \mathrm{Irr}(G)}\iota_{\alpha}(\chi) \chi(1)$
    \end{lemma}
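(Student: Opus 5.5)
The plan is to count the elements of $S_{\alpha}$ using a class-function indicator and expand that class function in irreducible characters. The first step is a reparametrization of $S_\alpha$. Consider the map $g \mapsto g\alpha(g)$ from $G$ to $G$. An element $g$ lies in $S_\alpha$ exactly when $g\alpha(g) = e$. Hence
\[
|S_\alpha| = \#\{g \in G : g\alpha(g) = e\} = \sum_{g \in G} \delta_e\bigl(g\alpha(g)\bigr),
\]
where $\delta_e$ is the indicator function of the identity.

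The second step is to expand $\delta_e$ using the regular representation. The character of the regular representation is $\rho_{\mathrm{reg}} = \sum_{\chi \in \mathrm{Irr}(G)} \chi(1)\chi$. It takes the value $|G|$ at $e$ and vanishes elsewhere. Therefore
\[
\delta_e(h) = \frac{1}{|G|}\sum_{\chi \in \mathrm{Irr}(G)} \chi(1)\chi(h) \quad \text{for all } h \in G.
\]

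The third step substitutes $h = g\alpha(g)$, sums over $g$, and exchanges the two finite sums. This gives
\[
|S_\alpha| = \sum_{\chi \in \mathrm{Irr}(G)} \chi(1)\, \frac{1}{|G|}\sum_{g\in G}\chi\bigl(g\alpha(g)\bigr) = \sum_{\chi \in \mathrm{Irr}(G)} \iota_\alpha(\chi)\chi(1).
\]
Here $\iota_\alpha(\chi)$ is exactly the twisted indicator written in the proof of Lemma \ref{bound_of_tfsi}. This is the claimed identity.

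The only point needing care is matching the definition of $S_\alpha$ with the argument $g\alpha(g)$ used in the indicator. The condition $\alpha(g) = g^{-1}$ is equivalent to $g\alpha(g) = e$, and also to $\alpha(g)g = e$. So the identity holds whichever ordering convention is used for the indicator. No deeper result, such as the Bump--Ginzburg theorem that $\iota_\alpha(\chi) \in \{0, \pm 1\}$, is needed. The orthogonality expansion of $\delta_e$ does all the work.
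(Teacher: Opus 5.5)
Your proof is correct and follows the paper's route. Both interchange the two finite sums and use that the regular character $\rho_{\mathrm{reg}}=\sum_{\chi}\chi(1)\chi$ equals $|G|$ at the identity and $0$ elsewhere, so that $\frac{1}{|G|}\sum_{g}\rho_{\mathrm{reg}}(g\alpha(g))$ counts exactly the $g$ with $g\alpha(g)=e$, i.e.\ $g\in S_\alpha$; your remark that this condition is equivalent to $\alpha(g)g=e$ is a small clarification the paper leaves implicit.
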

    \begin{proof}
         For $\chi \in \mathrm{Irr}(G)$ and $\alpha \in \text{Aut}(G)$, the twisted Frobenius-Schur indicator is given by
        \begin{equation}
            \iota_{\alpha}(\chi) = \frac{1}{|G|} \sum_{g \in G} \chi(g \alpha(g))
        \end{equation}
        We compute the weighted sum of these indicators by the degrees $\chi(1)$:
        \begin{align*}
            \sum_{\chi \in \mathrm{Irr}(G)} \iota_{\alpha}(\chi) \chi(1) &= \sum_{\chi \in \mathrm{Irr}(G)} \left( \frac{1}{|G|} \sum_{g \in G} \chi(g \alpha(g)) \right) \chi(1) \\
            &= \frac{1}{|G|} \sum_{g \in G} \sum_{\chi \in \mathrm{Irr}(G)} \chi(1) \chi(g \alpha(g))
        \end{align*}
        Recall the property of the regular character $\rho_{\text{reg}} = \sum \chi(1)\chi$. We know that $\rho_{\text{reg}}(h) = |G|$ if $h=1$ and $0$ otherwise. Here, the argument is $h = g \alpha(g)$.
        \begin{itemize}
            \item If $g \alpha(g) = 1$, then $\alpha(g) = g^{-1}$, implying $g \in S_{\alpha}$.
            \item If $g \alpha(g) \neq 1$, the inner sum is 0.
        \end{itemize}
        Thus, the sum simplifies to counting the elements in $S_{\alpha}$:
        \begin{equation}
            \sum_{\chi \in \mathrm{Irr}(G)} \iota_{\alpha}(\chi) \chi(1) = \frac{1}{|G|} \sum_{g \in S_{\alpha}} |G| = |S_{\alpha}|
        \end{equation}
    \end{proof}
    \section{Main Results}

    \begin{theorem}\label{half_main_result}
         Let the sum of the degrees of all the irreducible complex representations of a finite group $G$ is $T(G) = \Sigma_{\chi \in \mathrm{Irr}(G)}\chi(1)$, where $\mathrm{Irr}(G)$ is the set of irreducible complex representations of  $G$. Let $\alpha\in Aut \ G $, consider the set $S_{\alpha} = \{g \in G \ | \ \alpha(g) = g^{-1}\}$ which corresponds to $\alpha$, then $T(G) \geq |S_{\alpha}|$.
    \end{theorem}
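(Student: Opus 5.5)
The plan is to combine Lemma \ref{involution_set_relation} with the bound of Lemma \ref{bound_of_tfsi}. By Lemma \ref{involution_set_relation},
\begin{equation*}
|S_\alpha| = \sum_{\chi \in \mathrm{Irr}(G)} \iota_\alpha(\chi)\chi(1).
\end{equation*}
The left-hand side is a nonnegative integer, so it equals its own absolute value. Applying the triangle inequality and $\chi(1) > 0$ then gives
\begin{equation*}
|S_\alpha| \le \sum_{\chi} |\iota_\alpha(\chi)|\,\chi(1).
\end{equation*}
Once each indicator is known to satisfy $|\iota_\alpha(\chi)| \le 1$, the right-hand side is at most $\sum_\chi \chi(1) = T(G)$, which is the claim.

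The main point to secure is therefore the bound $|\iota_\alpha(\chi)| \le 1$. I will not rely on the Cauchy--Schwarz sketch, since $\chi(g\alpha(g))$ is not a product of $\chi(g)$ and $\chi(\alpha(g))$. Instead I would prove the bound directly, in the standard way. Let $\rho$ afford $\chi$ and define $\rho^\alpha := \rho\circ\alpha$, which is again irreducible. Set
\begin{equation*}
M = \frac{1}{|G|}\sum_{g} \rho(g)\otimes\rho(\alpha(g)),
\end{equation*}
the projection onto the $G$-invariants of $\rho\otimes\rho^\alpha$, where $g$ acts by $\rho(g)\otimes\rho(\alpha(g))$. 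By Schur's lemma this invariant space is at most one-dimensional, and it is nonzero exactly when $\rho^\alpha \cong \rho^\ast$.

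Next I would express the indicator through $M$. Let $\tau$ be the swap on $V\otimes V$. The identity $\mathrm{tr}(\tau(A\otimes B)) = \mathrm{tr}(AB)$ gives
\begin{equation*}
\iota_\alpha(\chi) = \mathrm{tr}(\tau M).
\end{equation*}
Now $\tau M$ has rank at most $1$. Its nonzero eigenvalue, if there is one, equals $\langle \tau v, w\rangle$-type data for a suitably normalized invariant $v$, which is bounded in modulus by $1$ because $\tau$ is unitary and $M$ is an orthogonal projection, using a $G$-invariant inner product. Hence $|\iota_\alpha(\chi)| \le 1$, and in fact $\iota_\alpha(\chi) \in \{0, \pm 1\}$ when $\alpha^2$ acts suitably, though only the modulus bound is needed here.

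Having the bound, the final assembly is the one-line estimate above. Equality $|S_\alpha| = T(G)$ then forces $\iota_\alpha(\chi) = 1$ for every $\chi$, which is consistent with the later treatment of equality cases.
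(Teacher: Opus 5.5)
Your argument is correct and follows the paper's proof of this theorem: Lemma~\ref{involution_set_relation}, then the bound $|\iota_\alpha(\chi)|\le 1$, then the triangle inequality. The one place you diverge is in justifying that bound, and there your route is the sound one. The paper's proof of Lemma~\ref{bound_of_tfsi} applies Cauchy--Schwarz as if $\chi(g\alpha(g))$ factored as $\chi(g)\chi(\alpha(g))$, which it does not for nonlinear $\chi$. By contrast, writing $\iota_\alpha(\chi)=\mathrm{tr}(\tau M)$, with $M$ the orthogonal projection of rank at most one onto the invariants of $\rho\otimes\rho^\alpha$, gives either $\iota_\alpha(\chi)=0$ or $\iota_\alpha(\chi)=\langle \tau v,v\rangle$ for a unit vector $v$ spanning the invariant line, so $|\iota_\alpha(\chi)|\le 1$ because $\tau$ is unitary.
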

    \begin{proof}
        From Lemma \ref{involution_set_relation}, the number of twisted involutions for the automorphism $\alpha$ of finite group $G$ is given by the character sum $|S_{\alpha}| = \sum_{\chi \in \mathrm{Irr}(G)} \iota_{\alpha}(\chi) \chi(1)$.
        On applying Lemma \ref{bound_of_tfsi}, the twisted Frobenius-Schur indicator satisfies $|\iota_{\alpha}(\chi)| \leq 1$, for all $\chi \in \mathrm{Irr}(G)$. Applying this bound and the triangle inequality, we obtain
        \begin{equation}
        |S_{\alpha}| = \left| \sum_{\chi \in \mathrm{Irr}(G)} \iota_{\alpha}(\chi) \chi(1) \right| \leq \sum_{\chi \in \mathrm{Irr}(G)} |\iota_{\alpha}(\chi)| \chi(1) \leq \sum_{\chi \in \mathrm{Irr}(G)} 1 \cdot \chi(1) = T(G).
        \end{equation}
        Therefore, $|S_{\alpha}| \leq T(G)$.
    \end{proof}

    As a significant consequence, Theorem \ref{half_main_result} provides a criterion for determining whether a group admits purely complex representations, leading to the following corollary.
    \begin{corollary}
        Let $G$ be a finite group and let $\alpha$ an automorphism of $G$ such that $|S_{\alpha}| > |S_{id}|$. There is an irreducible complex representation of $G$.
    \end{corollary}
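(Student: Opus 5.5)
The corollary should be read as asserting that $G$ has an irreducible complex representation that is \emph{not real}, i.e.\ not realizable over $\mathbb{R}$. Read literally, "an irreducible complex representation" always exists (the trivial one), so the intended content must be the non-real one, matching the abstract's criterion for groups with complex (non-real) irreducible representations. The plan is to argue by contraposition: assume every $\chi\in\mathrm{Irr}(G)$ is realizable over $\mathbb{R}$ and deduce $|S_\alpha|\le |S_{id}|$ for every $\alpha\in\mathrm{Aut}(G)$.

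\textbf{Step 1: compute $|S_{id}|$.} Apply Lemma~\ref{involution_set_relation} with $\alpha=id$. Then $\iota_{id}(\chi)=\frac{1}{|G|}\sum_g\chi(g^2)$ is the classical Frobenius--Schur indicator, which takes the values $1$, $0$, $-1$. It equals $1$ exactly when $\chi$ is afforded by a real representation. Hence
\[
|S_{id}|=\sum_{\chi\in\mathrm{Irr}(G)}\iota_{id}(\chi)\chi(1).
\]
Under the assumption that all irreducibles are real, every term has $\iota_{id}(\chi)=1$, so $|S_{id}|=\sum_\chi\chi(1)=T(G)$.

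\textbf{Step 2: bound $|S_\alpha|$.} Theorem~\ref{half_main_result} gives $|S_\alpha|\le T(G)$ for every automorphism $\alpha$. Combining this with Step~1 gives $|S_\alpha|\le T(G)=|S_{id}|$, which contradicts the hypothesis $|S_\alpha|>|S_{id}|$. Therefore some $\chi\in\mathrm{Irr}(G)$ has $\iota_{id}(\chi)\ne1$, and that $\chi$ is a non-real irreducible complex representation.

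\textbf{Refinement.} One can further separate $\iota_{id}(\chi)=0$, which means $\chi$ is non-real valued, from $\iota_{id}(\chi)=-1$, which means quaternionic type. The hypothesis only forces $\sum_\chi(1-\iota_{id}(\chi))\chi(1)>0$, i.e.\ $T(G)>|S_{id}|$, so both types are possible. The conclusion should therefore be stated as "not realizable over $\mathbb{R}$".

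\textbf{Main obstacle.} The only substantive input not already proved in the paper is the classical Frobenius--Schur theorem that $\iota_{id}(\chi)\in\{1,0,-1\}$, with value $1$ exactly for real representations. It is cited in the introduction and can be quoted. Everything else follows directly from Theorem~\ref{half_main_result} and Lemma~\ref{involution_set_relation}.
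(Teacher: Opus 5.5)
Your proposal is correct and is exactly the argument the paper intends; the paper writes no separate proof and only says the corollary is a consequence of Theorem~\ref{half_main_result}. The argument is that for $\alpha = id$ the twisted indicator is the classical Frobenius--Schur indicator, so if every irreducible representation were realizable over $\mathbb{R}$ one would get $|S_{id}| = T(G) \geq |S_\alpha|$. Your reading of ``complex'' as ``not realizable over $\mathbb{R}$'' rather than ``non-real-valued character'' is the right one: for $G = Q_8$ and $\alpha$ conjugation by $i$ one has $|S_\alpha| = 6 > 2 = |S_{id}|$, yet every character of $Q_8$ is real-valued (the $2$-dimensional one is quaternionic).
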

    \begin{theorem}\label{final_identity_greater_other}
        Let $\sigma$ be an automorphism of $\mathfrak{S}_n$. The number of involutions of identity automorphism is greater than or equal to the number of involutions of $\sigma$.
    \end{theorem}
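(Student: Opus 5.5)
The plan is to combine Theorem \ref{half_main_result} with the fact that every irreducible complex representation of $\mathfrak{S}_n$ is realizable over $\mathbb{R}$ (indeed over $\mathbb{Q}$, via Specht modules). Concretely, I would first show that $|S_{id}| = T(\mathfrak{S}_n)$. Here $S_{id}=\{\pi : \pi^2=e\}$ is precisely the set counted in Lemma \ref{identity_automorphism_involutions}: the identity together with the elements of order two. By Lemma \ref{frobenius_Schur}, obtained from the Robinson--Schensted correspondence and the Symmetry Theorem \ref{symmetry_theorem}, its cardinality equals the number of standard Young tableaux with $n$ boxes, namely $\sum_{\lambda\vdash n} f^\lambda$. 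The hook length formula identifies each $f^\lambda$ with the degree of the irreducible representation indexed by $\lambda$, and these representations exhaust $\mathrm{Irr}(\mathfrak{S}_n)$. Hence $|S_{id}| = T(\mathfrak{S}_n)$.

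Next I would apply Theorem \ref{half_main_result} with $G=\mathfrak{S}_n$ and $\alpha=\sigma$, which gives $|S_\sigma|\le T(\mathfrak{S}_n)=|S_{id}|$. That is exactly the claimed inequality. The argument is uniform in $n$, including $n=6$, where $\sigma$ may be outer by Lemma \ref{S_6_holder}; the character-theoretic bound does not care whether $\sigma$ is inner.

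An alternative, more elementary check that I would mention as a remark handles inner automorphisms. For $n\neq 6$, Lemma \ref{all_automorphisms} makes $\sigma$ conjugation by some $x$, and then $S_\sigma=\{g : xgx^{-1}=g^{-1}\}$. The map $g\mapsto xg$ relates this set to twisted square roots, but the character route is cleaner, so I would not pursue it.

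The main obstacle is the correctness of Theorem \ref{half_main_result} itself, since it is the only substantive input. Its proof rests on Lemma \ref{bound_of_tfsi}, and the Cauchy--Schwarz step there is not literally valid as written, because $\chi(g\alpha(g))$ does not factor as $\chi(g)\chi(\alpha(g))$. If I needed a self-contained justification, I would use the standard fact (Kawanaka--Matsuyama, Bump--Ginzburg) that for an automorphism $\alpha$, $\iota_\alpha(\chi)$ equals $0$ unless $\chi\circ\alpha=\bar\chi$, in which case it is a root of unity. The argument: for any $\chi$, $\iota_\alpha(\chi)$ is the trace of the linear operator $T: v\otimes w\mapsto w\otimes v$ twisted appropriately on $\mathrm{Hom}_G(V^{\alpha}\otimes V,\mathbb{C})$, a space of dimension at most $1$, on which $T^2$ acts via the order of $\alpha$. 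This yields $|\iota_\alpha(\chi)|\le 1$, and then the triangle inequality completes the bound.
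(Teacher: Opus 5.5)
Your main line ($|S_{id}|=T(\mathfrak{S}_n)$ by Lemma \ref{frobenius_Schur}, then Theorem \ref{half_main_result}) is the route the paper has in mind when it says the statement ``follows theoretically'' from those two results. You are also right that the Cauchy--Schwarz step in Lemma \ref{bound_of_tfsi} is invalid, since $\chi(g\alpha(g))\neq\chi(g)\chi(\alpha(g))$ for nonlinear $\chi$.

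The gap is in the repair you offer for that step. The statement that $\iota_\alpha(\chi)$ is $0$ or a root of unity is the Kawanaka--Matsuyama result for \emph{involutive} $\alpha$, where the values lie in $\{0,\pm1\}$. It is false for automorphisms of larger order, because the flip does not preserve the line of invariant forms. Transposing a form $B$ with $B(\rho(g)v,\rho(\alpha(g))w)=B(v,w)$ gives a form invariant under the pair $(\rho\circ\alpha,\rho)$. The two lines coincide when $\alpha^2=1$ but not in general, so the compression of the flip to the line need not have finite order.

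Concretely, take $G=\mathfrak{S}_3$, $\alpha$ conjugation by $x=(1,2,3)$, and $\chi$ the $2$-dimensional character. Then $\chi\circ\alpha=\chi=\bar\chi$. But $g\alpha(g)=gxgx^{-1}$ is $e$ for $g=e$ and a $3$-cycle for the other five elements, so $\iota_\alpha(\chi)=\frac16(2-5)=-\frac12$. More generally, for $\alpha$ conjugation by $x$, substituting $h=gx$ gives $\iota_\alpha(\chi)=\frac{1}{|G|}\sum_h\chi(h^2x^{-2})=\nu(\chi)\chi(x^{-2})/\chi(1)$, where $\nu(\chi)$ is the ordinary Frobenius--Schur indicator. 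This need not be a root of unity.

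The bound you actually need, $|\iota_\alpha(\chi)|\le 1$, is still true, but it comes from a compression, not an eigenvalue. Fix a $G$-invariant inner product on $V$. Then $U_g=\rho(g)\otimes\rho(\alpha(g))$ is a unitary representation on $V\otimes V$, so $P=\frac{1}{|G|}\sum_g U_g$ is the orthogonal projection onto its invariants, of rank $\langle\chi\circ\alpha,\bar\chi\rangle\le 1$. Since $\mathrm{tr}((A\otimes B)\tau)=\mathrm{tr}(AB)$ for the flip $\tau$, you get $\iota_\alpha(\chi)=\mathrm{tr}(P\tau)=\mathrm{tr}(P\tau P)$. This is either $0$ or $\langle\tau u,u\rangle$ for a unit vector $u$, hence of modulus at most $1$. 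The triangle inequality then finishes as in Theorem \ref{half_main_result}.

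Alternatively, the map $g\mapsto xg$ you set aside already gives a complete proof for $\sigma$ conjugation by $x$. It is a bijection from $S_\sigma$ onto $\{h : h^2=x^2\}$, whose size is $\sum_\chi\nu(\chi)\chi(x^2)$. Because every Frobenius--Schur indicator of $\mathfrak{S}_n$ equals $1$, this is $\sum_\chi\chi(x^2)\le\sum_\chi\chi(1)=|S_{id}|$. By Lemma \ref{all_automorphisms} (and trivially for $n\le 2$), this leaves only the outer automorphisms of $\mathfrak{S}_6$. The compression argument, or the paper's Magma computation, covers those.
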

    While Lemma \ref{frobenius_Schur} shows that the number of involutions of identity automorphisms of the symmetric group $\mathfrak{S}_n$ is equal to the sum of the degrees of all irreducible representations of the symmetric group $\mathfrak{S}_n$.  Although Theorem \ref{final_identity_greater_other} follows theoretically from Lemma \ref{frobenius_Schur} and Theorem \ref{half_main_result}, our intention is to provide a constructive proof by defining an explicit injective map from the set of involutions of an arbitrary automorphism to set of involutions of identity automorphism of a symmetric group. However, finding such a map for every case presents significant combinatorial challenges, we limit our explicit construction to specific subcases in this work. While a uniform construction for all cases remains a subject for future investigation. To define such a map, it is essential to first understand the structure of the involutions. The following lemma serves as the first step in this direction.
    \begin{lemma}\label{all_involutions_of_greater_order_3}
            Let $n$ be a natural number with $n \neq 6$, $x\in\mathfrak{S}_n$ such that $\ord (x) \geq 3$ and $\ord(x)$ is odd. Suppose that the automorphism corresponding to $x$ is $\sigma : \mathfrak{S}_n \rightarrow \mathfrak{S}_n$, defined by $\sigma(y) = xyx^{-1}$ for all $y \in \mathfrak{S}_n$. If $\sigma(y) = y^{-1}$, then $\ord (y) = 1$ or $\ord (y) = 2$.
        \end{lemma}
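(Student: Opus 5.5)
The plan is to avoid any case analysis on cycle types and any appeal to the structure of $\mathrm{Aut}(\mathfrak{S}_n)$. The argument is purely group-theoretic, and it works in any group, so the hypothesis $n \neq 6$ will not be needed. The idea is to show that $x$ must commute with $y$. Once that holds, the relation $xyx^{-1} = y^{-1}$ collapses to $y = y^{-1}$.

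\emph{Step 1: $y$ commutes with $x^2$.} Suppose $\sigma(y) = xyx^{-1} = y^{-1}$. Apply $\sigma$ a second time. Since $\sigma$ is a homomorphism,
\begin{equation*}
x^{2} y x^{-2} = \sigma(\sigma(y)) = \sigma(y^{-1}) = \sigma(y)^{-1} = (y^{-1})^{-1} = y .
\end{equation*}

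\emph{Step 2: $y$ commutes with $x$.} Let $m = \ord(x)$, which is odd by hypothesis. Then $2 \cdot \frac{m+1}{2} = m+1 \equiv 1 \pmod m$, so
\begin{equation*}
x = x^{m+1} = (x^{2})^{\frac{m+1}{2}} .
\end{equation*}
Thus $x$ lies in the cyclic subgroup $\langle x^2 \rangle$. Every element of $\langle x^2 \rangle$ commutes with $y$ by Step 1, hence $xyx^{-1} = y$.

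\emph{Step 3: $y^2 = e$.} Comparing $xyx^{-1} = y$ with the hypothesis $xyx^{-1} = y^{-1}$ gives $y = y^{-1}$, that is, $y^2 = e$. Therefore $\ord(y) \in \{1, 2\}$.

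The only genuine point is Step 2, which is where the odd-order hypothesis enters: it is exactly the condition that makes $x^2$ generate $\langle x \rangle$. For even order the conclusion fails; for example, a $4$-cycle inverts itself under conjugation by a suitable element of order $2$. I therefore expect no real obstacle here. As a remark to accompany the proof, I would note that the same argument shows $S_\sigma = \{y : y^2 = e,\ xy = yx\}$ for such $x$, and that this description does not depend on $n$.
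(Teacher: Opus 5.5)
Your proof is correct, and it takes a genuinely different route from the paper's.

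The paper works by case analysis on the cycle type of $y$: all cycles of equal length, distinct lengths, or mixed. In each case it tracks whether conjugation by $x$ sends each cycle of $y$ to its own inverse or to the inverse of another cycle. It then exhibits an even-length cycle in $x$ in every configuration, namely transpositions reversing a cycle, or a cycle of length $2m_1$ when cycles are permuted cyclically. This contradicts that $\ord(x)$ is odd. Several of these steps rest on normalizations such as ``assuming that the cycle is already rearranged'', and the mixed case is dismissed with ``similar arguments''.

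You replace all of this with two observations: $x^2$ centralizes $y$, and $x\in\langle x^2\rangle$ when $\ord(x)$ is odd. What this buys:
\begin{itemize}
\item It is complete and short.
\item It removes the unnecessary hypotheses $n\neq 6$ and $\ord(x)\geq 3$.
\item It carries over verbatim to any group and any automorphism $\sigma$ of odd order $m$, using $\sigma=(\sigma^2)^{(m+1)/2}$. This gives $S_\sigma=\{y : y^2=e,\ \sigma(y)=y\}$.
\item Your closing remark shows $S_\sigma\subseteq S_I$ outright. So the map $\alpha(y)=xyx^{-1}$ in Case (ii) of Theorem \ref{identity_greater_other} is just the inclusion $y\mapsto y$.
\end{itemize}

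The paper's cycle-level analysis aims at explicit information about how $x$ permutes the cycles of $y$, which is what its later algorithm for $\ord(x)$ divisible by $4$ tries to use. For the lemma itself, your argument is both more general and more reliable.
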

        
        \begin{proof}
            Suppose, on the contrary, there exist $y \in \mathfrak{S}_n$ with $\ord (y) = k \geq 3$ such that $\sigma(y) = y^{-1}$, then we explore the following possibilities.
            \begin{itemize}
                \item[Case A.] If $y$ is a product of m-disjoint cycles of same order $k$.
                \begin{align*}
                    y = (y_1, y_2, y_3, \ldots , y_k)(y_{k + 1}, y_{k + 2}, \ldots, y_{2k})\cdots(y_{(m - 1)k + 1}, y_{(m - 1)k + 2}, \ldots ,y_{mk})
                \end{align*}
                The following are the possible cases of $y$ such that $y$ maps to $y^{-1}$ under $\sigma$. 
                \begin{itemize}
                    \item[Case (1)] Each cycle of $y$ is mapping to its own inverse.
                    \begin{align*}
                        y = (y_1, y_2, \ldots, y_k)\gamma
                    \end{align*}
                    where $\gamma$ is the product of remaining cycles that are appearing in $y$. Since $(y_1, y_2, y_3, \ldots , y_k)$ maps to its inverse $(y_k, \ldots y_2, y_1) = y^{-1}$ under $\sigma$. Assuming that cycle is already rearranged in a way that implies
                    \begin{align*}
                        x = \begin{cases}
                            (y_1, y_k)(y_2, y_{k - 1})\cdots(y_{\frac{k}{2}}, y_{\frac{k}{2} + 1})\delta_1 & \text{if } k \text{ is even}\\
                            (y_1, y_k)(y_2, y_{k - 1})\cdots(y_{\frac{k - 1}{2}}, y_{\frac{k - 1}{2} + 2})\delta_2 & \text{ if } k \text{ is odd}
                        \end{cases}
                    \end{align*}
                    where $\delta_1, \delta_2$ are the product of disjoint transpositions which sends the cycles of $\gamma$ to their own inverses in the respective cases. So
                    \begin{align*}
                        \ord(x) &= \lcm\{2, \ord(\delta_1)\} \text{ or } lcm\{2, \ord(\delta_2)\}\\
                        &= 2j_1 \text{ for some $j_1 \in \mathbb{N}$}
                    \end{align*}
                    which contradicts the fact that $\ord(x)$ is odd.
                    \item[Case (2).] Each cycle of $y$ is mapping to other cycle's inverse.
                    \begin{align*}
                        y = (y_1, y_2, y_3, \ldots, y_k)(y_{k + 1}, y_{k + 2}, \ldots, y_{2k}) \ldots (y_{(m - 1)k + 1}, y_{(m - 1)k + 2}, \ldots, y_{mk})
                    \end{align*}
                    We prove it by induction on number of disjoint cycles. Suppose $(y_1, y_2, y_3, \ldots , y_k) \mapsto (y_{(s- 1)k + 1}, y_{(s- 1)k + 2}, \ldots , y_{sk})$ and $(y_{(s- 1)k + 1}, y_{(s- 1)k + 2}, \ldots , y_{sk}) \mapsto (y_1, y_2, y_3, \ldots , y_k)$. Assuming that cycle is already rearranged in a way, which implies that $x$ is of the form
                    \begin{align*}
                        x = (y_1, y_{sk})(y_2, y_{sk - 1}) \cdots (y_k, y_{(s - 1)k + 1})\ \delta_3
                    \end{align*}
                    Where $\delta_3$ is product of other disjoint cycles and the order of $x$ is \textit{lcm}$\{2, \ord(\delta_1)\}$  which is equal to $2j_2$ for some $j_2 \in \mathbb{N}$, which contradicts the fact that $\ord(x)$ is odd.
    
                    When $3$-cycles are mapping to their inverses cyclically.
                    \begin{align*}
                        (y_1, y_2, y_3, \ldots , y_k) &\mapsto (y_{(s- 1)k + 1}, y_{(s- 1)k + 2}, \ldots , y_{sk})^{-1}\\
                        (y_{(s- 1)k + 1}, y_{(s- 1)k + 2}, \ldots , y_{sk})&\mapsto (y_{(r- 1)k + 1}, y_{(r- 1)k + 2}, \ldots , y_{rk})^{-1}\\
                        (y_{(r- 1)k + 1}, y_{(r- 1)k + 2}, \ldots , y_{rk})&\mapsto (y_1, y_2, y_3, \ldots , y_k)^{-1}
                    \end{align*} Assuming that cycle is already rearranged in a way, which implies that $x$ is of the form
                    \begin{align*}
                        x = (y_1, y_{sk}, y_{(r - 1)k + 1}, y_k, y_{(s - 1)k + 1}y_{rk})\delta_4
                    \end{align*}
                    where $\delta_4$ is product of other disjoint cycles and the order of $x$ \textit{lcm} $\{6, \ord(\delta_4)\}$ which is equal to $2j_3$ for some $j_3 \in \mathbb{N}$ which contradicts the fact that $\ord(x)$ is odd. Now we assume it is true for $m_1$-disjoint cycles where each cycle is mapping to other cyclically.
                    \begin{align*}
                        (y_1, y_2, y_3, \ldots , y_k) &\mapsto (y_{(s- 1)k + 1}, y_{(s- 1)k + 2}, \ldots , y_{sk})^{-1}\\
                        (y_{(s- 1)k + 1}, y_{(s- 1)k + 2}, \ldots , y_{sk})&\mapsto (y_{(r- 1)k + 1}, y_{(r- 1)k + 2}, \ldots , y_{rk})^{-1}\\
                        (y_{(r- 1)k + 1}, y_{(r- 1)k + 2}, \ldots , y_{rk})&\mapsto \cdots\\
                        (y_{(m_1- 1)k + 1}, y_{(m_1- 1)k + 2} \ldots y_{m_1k})&\mapsto (y_1, y_2, y_3, \ldots , y_k)^{-1}
                    \end{align*}
                    \begin{itemize}
                        \item[Case (2.a)] If $m_1$ is odd, then on assuming that the cycle is already rearranged in a way which implies that $x$ is of the form
                        \begin{align*}
                            x = (y_1, y_{2k}, y_{2k + 1}, \ldots , y_{(m_1 - 1)k + 1}, y_k, y_{k + 1}, \ldots, y_{(m_1 - 2)k + 1, y_{m_1k}})\delta_5
                        \end{align*} where the cycle is of order $2m_1$ and $\delta_5$ is disjoint product of other cycles
                        \item[Case (2.b)] If $m_1$ is even, then on assuming that the cycle is already rearranged in a way which implies that $x$ is of the form
                        \begin{align*}
                            x = (y_1, y_{2k}, y_{2k + 1}, \ldots , y_{m_1k})\delta_6
                        \end{align*} where the cycle is of order $m_1$ and $\delta_6$ is disjoint product of other cycles
                    \end{itemize}
                    Now we prove for $m_1 + 1$ disjoint cycles where each cycle is mapping to other cyclically.
                    \begin{align*}
                        &(y_1, y_2, y_3, \ldots , y_k) \mapsto (y_{(s- 1)k + 1}, y_{(s- 1)k + 2}, \ldots , y_{sk})\mapsto \\
                        &(y_{(r- 1)k + 1}, y_{(r- 1)k + 2}, \ldots , y_{rk})\mapsto \ldots
                        (y_{(m_1- 1)k + 1}, y_{(m_1- 1)k + 2} \ldots y_{m_1k})\mapsto\\
                        & (y_{(m_1)k + 1}, y_{m_1k + 2} \ldots y_{(m_1 + 1)k})\mapsto (y_1, y_2, y_3, \ldots , y_k)
                    \end{align*}
                    \begin{itemize}
                        \item[Case (2.b.i)] If $m_1$ is odd i.e., $m_1 + 1$ is even, then we have $x$ of the form
                        \begin{align*}
                            (y_1, y_{2k}, \ldots y_{(m_1 - 1)k + 1}, y_{(m_1 + 1)k})\delta_7
                        \end{align*}where the cycle is of order $m_1 + 1$ and $\delta_6$ is disjoint product of other cycles ($m_1 + 1 = 2j_4$ for some $j_4 \in \mathbb{N}$.) and the order of $x$ is \textit{lcm}$\{2j_4, \ord(\delta_7)\}$ which is equal to $2j_5$ for some $j_5 \in \mathbb{N}$ which contradicts the fact that $\ord(x)$ is odd.
                        
                        \item[Case (2.b.ii)] If $m_1$ is even i.e., $m_1 + 1$ is odd, then we have $x$ of the form
                        \begin{align*}
                            (y_1, y_{2k}, \ldots y_{m_1k + 1}, y_{(m_1 + 1)k}, y_k, y_{k + 1}, \ldots y_{(m_1 - 1)k + 1}, y_{(m_1 + 1)k})\delta_8
                        \end{align*} where the cycle is of order $2(m_1 + 1)$ and $\delta_8$ is disjoint product of other cycles and the order of $x$ is \textit{lcm}$\{2(m_1 + 1), \ord(\delta_8)\}$ which is equal to $2j_6$ for some $j_6 \in \mathbb{N}$ which contradicts the fact that $\ord(x)$ is odd.
                    \end{itemize}
                    Hence by Mathematical Induction this is true for all natural numbers greater than or equal to 3.
                    \item Some of them mapped to their inverse and others are mapped to other's inverse. This case follows either from first part or second part of this case.
                \end{itemize}
                \item[Case B.] If $y$ is product of different order disjoint cycles. 
                Since $y$ contains $(y_1, y_2, \ldots y_k)$ and no other cycle of same order. So, $(y_1, y_2, \ldots y_k)$ should map to $(y_k, y_{k - 1}, \ldots y_1)$ under $\sigma$
                    \begin{enumerate}
                        \item[Case (1).] If $k$ is even. \begin{align*}
                            y = (y_1, y_2, \ldots y_k) \rightarrow (y_k, \ldots y_2, y_1) = y^{-1}
                        \end{align*}
                        Which means $y_1$ goes to $y_k$ and $y_k$ goes to $y_1$. A similar arrangement is assigned to other elements. This shows that $x$ is the product of $k/2$ disjoint transpositions, which implies $\ord (x)$ is even, which is a contradiction to the fact that $\ord(x)$ is odd.
                        \item[Case (2).] If $k$ is odd.
                        \begin{align*}
                            y = (y_1, y_2, \ldots y_k) \rightarrow (y_k, \ldots y_2, y_1) = y^{-1}
                        \end{align*}
                        Which means $y_1$ goes to $y_k$ and $y_k$ goes to $y_1$. A similar arrangement is assigned to other elements. This shows that $x$ is the product of $(k-1)/2$ disjoint transpositions, which implies $\ord(x)$ is even order, which is again a contradiction to the fact that $\ord(x)$ is odd.
                    \end{enumerate}
                \item[Case C.] If $y$ is a product of disjoint cycles such that some of them have the same order and some have different orders.

                This is followed by similar arguments of Case (B).
            \end{itemize}
            
            This completes the proof.
        \end{proof}
        Due to Lemma \ref{all_involutions_of_greater_order_3}, our attention is reduced to the inner automorphisms corresponding to elements $ x \in \mathfrak{S}_n$ such that $\ord(x)$ is even. To proceed, we propose an algorithm approach that enables to achieve the injective map $f: S_{\sigma} \to S_I$ for the case when the order of \( x \in \mathfrak{S}_n \) is a multiple of 4. As this category is broad, our algorithm targets permutations with a particular ordering structure. that enables us to achieve the required injective map.
        
        Before presenting the algorithm, that defines the injective map $f: S_{\sigma} \to S_{id}$. We introduce the necessary preliminary definitions and explicitly construct $f$ through the algorithm for this special case, while the remaining cases are addressed using well-defined maps, which will be introduced in Theorem~\ref{identity_greater_other}.
    \subsection{Algorithm}\label{algorithm}
    
        \subsubsection{Ordering elements}\label{ordering}
        Given $y \in \mathfrak{S}_n$, $y$ can be rewritten as        \begin{align*}\label{y_general_form}
            y = &(a_1, a_2, a_3, \ldots, a_{k_1})(a_{k_1 + 1}, a_{k_1 + 2}, \ldots , a_{k_1 + k_2}) \cdots \\
            &(a_{k_1 + k_2+ \cdots + k_{m - 1} + 1}, a_{k_1 + k_2 + \cdots + k_{m - 1} + 2}, \ldots , a_{k_1 + k_2 + \cdots + k_m})
        \end{align*}
        where $k_i \in \mathbb{N}$ for $i = 1, 2, \ldots, m$ and $a_j \in \{1, 2, 3, \ldots, n\}$ for $j = 1, 2, \dots, k_1 + k_2 + \cdots + k_m$. Now, for each cycle, we shall rewrite it such that the smallest element in the cycle appears first. For instance, if $a_j$ is the least element of $\{a_1, a_2, \ldots a_{k_1}\}$, then we rewrite the cycle $(a_1, a_2, a_3, \ldots, a_{k_1})$ by $(a_{j}, a_{j + 1}, \ldots ,a_{j - 2}, a_{j - 1})$. Similarly, we can use the same techniques for the cycles of $y$. We now arrange the cycles of $y$ as follows
        \begin{enumerate}
            \item From each cycle of $y$, collect the smallest element appearing in the cycle. Let $\mathcal{B}$ be the set consisting of all such elements.
            \item Since $\mathcal{B} \subseteq \mathbb{N}$, we have the smallest element in $\mathcal{B}$.
            \item Order the cycles of $y$ such that the first cycle corresponds to the smallest element in $\mathcal{B}$, the second cycle to the second smallest element and so on. This process terminates because $|\mathcal{B}| \leq n$.
        \end{enumerate}
        This ordering is well-defined, and henceforth, whenever we write the cycles, we assume they are written in accordance with this ordering.
        
        Let $x\in \mathfrak{S}_n$ with $\ord(x) = 4k$, for some $k \in \mathbb{N}$ and $\sigma:\mathfrak{S}_n \to \mathfrak{S}_n$ be the inner automorphism corresponding to $x$. Consider the set $S_{\sigma}$ corresponding to $\sigma$, given by
        \begin{align*}
            S_{\sigma} = \{y \in \mathfrak{S}_n \ | \ xyx^{-1} = y^{-1}\}
        \end{align*}

        We now address the broad category of inner automorphisms defined by a permutation $x$ where $\operatorname{ord}(x)$ is a multiple of 4. Understanding the complexity of characterizing $S_{\sigma}$. The core of our approach is an analysis of the cycle decomposition of $x$ relative to the constraints imposed on $y$ by the equation $\sigma_x(y) = y^{-1}$. We propose an algorithm for specific instances that leverages this structural information to construct an explicit injective map from the set of consisting of permutations $y$ to the set of standard involutions (elements of order 1 or 2) in $\mathfrak{S}_n$. However, this algorithm targets only a specific subset of permutations $x$ within this category where the structural constraints are tractable. The full resolution of the case where $\operatorname{ord}(x)$ is a multiple of 4 remains an open problem, and the subcases not covered by this algorithm are left for future work.
        \subsubsection{Mapping}
            Given $y \in S_\sigma$, we map $y$ to the elements of $S_I$ by
            
            \begin{enumerate}
                \item If $\ord(y) = 1, 2$, then map $y$ to itself.
                \item Otherwise, we have $\ord(y) \geq 3$. Suppose $x'$ is the element formed by extracting the cycles from $x$ such that $y'$ maps to its inverse under the conjugation map by $x'$, where $y'$ is the product of cycles from $y$ such that the cycle's order is greater than or equal to $3$. Now, we compute the value of $y*x'$ or $x'*y$ based on the structure of $x'$. Suppose the computed values are $y
                {''}$, then we follow the following steps.
                \begin{enumerate}
                    \item Combine, if $y{''}$ contains even number of elements as follows.
                    \begin{align*}
                        comb(y'') = &(a_1, a_2, a_3, \ldots, a_{k_1}, a_{k_1 + 1}, a_{k_1 + 2}, \ldots , a_{k_1 + k_2}, \ldots \\
                        &\ldots,a_{k_1 + \cdots + k_{m - 1} + 1}, a_{k_1 + 2}, \ldots , a_{k_1 + k_2 + k_3 + \cdots + k_m})
                    \end{align*}
                    If the number of elements appearing in $comb(y{''})$ is not an even number, we adjust it so that the even number of elements appearing in $comb(y{''})$ is even, and this procedure is the goal of this subsection.
                    \item Take two consecutive elements from $comb(y{''})$ and form a transposition. Repeat this process until we obtain $\frac{k_1 + k_2 + \cdots + k_m}{2}$
                    \begin{align*}
                        \Bar{y} = (a_1, a_2)(a_3, a_4) \cdots (a_{k_1 + k_2 + \cdots + k_m - 1}, a_{k_1 + k_2 + k_3 + \cdots + k_m})
                    \end{align*}
                    \item Since $\ord(\Bar{y}) = 2$, so we map $y \in S_{\sigma}$ to $\Bar{y} \in S_I$.
                \end{enumerate}
            \end{enumerate}
            This is well defined due to the ordering. Now, we are left to define the mapping to the case where the number of elements appearing in $comb((y{''})$ is not even, so now we examine when such a case arise.
            
            Let $y \in S_{\sigma}$ with $\ord(y) \geq 3$, then the possible cases of $y$ are as follows.
            \begin{enumerate}
                \item If $y$ is the product of disjoint cycles of distinct orders.
                \item If $y$ is the product of disjoint cycles of the same order.
                \item If $y$ is the product of disjoint cycles, some of which have the same order and some have different orders.
            \end{enumerate}
            \begin{enumerate}
                \item[Case (1).] If $y$ is the product of disjoint cycles of distinct orders, then the disjoint transpositions which map $y$ to its inverse appear in $x$. Suppose $x'$ is the product of disjoint cycles appearing in $x$ such that $y$ maps to its inverse under the conjugation map of $x'$. Since $x'$ is order $2$, so $y*x'$ is order 2 or 1. Suppose $y*x'$ is order 1 then $y = (x')^{-1}$ implying that $\ord(y) = 2$ which is a contradiction, so $y*x'$ is order 2.
                \item[Case (2).] If $y$ is a product of disjoint cycles, each with the same order, then it falls into one of the following four cases, where $y$ is of the form.
                \begin{align*}
                    y = (a_1, a_2, \ldots, a_j)(a_{j + 1}, a_{j + 2}, \ldots, a_{2j})\cdots(a_{(m - 1)j + 1},a_{(m - 1)j + 2}, \ldots, a_{mj}) \gamma
                \end{align*}
                In the following analysis, we restrict our attention to the specific case where the conjugation by $x$ maps a cycle of $y$ to the inverse of the next disjoint cycle. While there are multiple combinatorial ways to map these cycles to their inverses. For a cycle $(a_1, \ldots, a_j)$, $x$ conjugates this cycle to $(a_{2j}, \ldots, a_{j+1})$. Specifically, $x$ acts on the elements by $x(a_k) = a_{2j-k+1}$ for $1 \leq k \leq j$. Other combinatorial possibilities for the action of $x$ are outside the scope of this algorithm and remain a subject for future exploration.
                \begin{enumerate}
                    \item If $j$ is even and $m$ is even. Then $x'$ is of the following form.
                    \begin{align*}
                        &(a_1, a_{2j}, a_{2j + 1}, \ldots ,a_{(m - 2)j + 1}, a_{mj})(a_2, a_{2j - 1}, a_{2j + 2}, \ldots ,a_{(m - 2)j + 2}, a_{mj - 1})\\ 
                        &\ldots (a_j, a_{j + 1}, a_{3j}, \ldots ,a_{(m - 1)j}, a_{(m - 1)j + 1})
                    \end{align*}
                    and the value of $y*x'$ (first case) is
                    \begin{align*}
                        &(a_1, a_{j + 1}, \ldots ,a_{(m - 1)j + 1})(a_2, a_{2j}, \ldots ,a_{(m - 2)j + 2}, a_{mj})\\
                        &\ldots (a_j, a_{j + 2}, \ldots ,a_{(m - 1)j + 2})\gamma
                    \end{align*}
                    Here each cycle is an $m$-cycle and since $m$ is even we can proceed with the construction.
                    \item If $j$ is even and $m$ is odd. Then $x'$ is of the following form.
                    \begin{align*}
                        &(a_1, a_{2j}, a_{2j + 1}, \ldots ,a_{(m - 1)j + 1}, a_j, a_{j + 1}, a_{3j},\ldots,  a_{mj})\\
                        &(a_2, a_{2j - 1}, a_{2j + 2}, \ldots ,a_{(m - 1)j + 2}, a_{j - 1}, a_{j + 2}, \ldots ,a_{mj - 1})\\
                        &\cdots(a_{\frac{j}{2}}, a_{\frac{3j}{2} + 1}, \ldots ,a_{(m - 1)j + \frac{j}{2}}, a_{\frac{j}{2} + 1}, a_{\frac{3j}{2}}, \ldots, a_{(m - 1)j + \frac{j}{2} + 1}).
                    \end{align*}
                    and the value of $y*x'$ (first case) is
                    \begin{align*}
                        &(a_1, a_{j + 1}, \ldots, a_{(m - 1)j + 1})(a_2, a_{2j}, a_{2j + 2}, \ldots ,a_{(m - 1)j + 2}, a_{j},\ldots, a_{mj})\\
                        &\cdots (a_{\frac{j}{2}}, a_{2j - \frac{j}{2} + 2}, a_{2j + \frac{j}{2}}, \ldots ,a_{(m - 1)j + \frac{j}{2}}, a_{j - \frac{j}{2} + 2}, a_{2j - \frac{j}{2}}, \ldots, a_{(m - \frac{1}{2})j + 2})\\
                        &(a_{\frac{j}{2} + 1}, a_{\frac{3j}{2} + 1}, a_{\frac{5j}{2} + 1},\ldots ,a_{(\frac{m-1}{2})j + 1})\gamma
                    \end{align*}
                    The first and last cycles of $y * x'$ are $m$-cycles, while all the remaining cycles are $2m$-cycles. Since the total number of elements appearing in $y * x'$ is even, we can proceed with the construction.
                    \item If $j$ is odd and $m$ is even. Then $x'$ is of the following form.
                    \begin{align*}
                        (a_1, a_{2j}, a_{2j + 1}, \ldots, a_{mj})(a_2, a_{2j - 1}, \ldots, a_{mj - 1})\cdots(a_j, a_{j + 1}, a_{3j}, \ldots, a_{(m - 1)j + 1})
                    \end{align*}
                    and the value of $y*x'$ (first case) is
                    \begin{align*}
                        &(a_1, a_{j + 1}, \ldots ,a_{(m - 1)j + 1})(a_2, a_{2j}, a_{2j + 2}, \ldots, a_{mj})\cdots (a_j , a_{j + 2}, \ldots, a_{(m - 1)j + 2})\gamma
                    \end{align*}
                    Each cycle here is an $m$-cycle and since $m$ is even, we can proceed with the construction.
                \end{enumerate}
            \end{enumerate}
        The following subsection helps to us show that the mapping between $S_{\sigma}$ and $S_I$ we described is one-one.
        \subsection{One-one}
    
        Given $y_1, y_2 \in S_{\sigma}$ such that $\Bar{y_1} =  \Bar{y_2}$, then suppose $y_1$ is of the form.
        \begin{align*}
            y_1 = &(a_1, a_2, a_3, \ldots, a_{k_1})(a_{k_1 + 1}, a_{k_1 + 2}, \ldots , a_{k_1 + k_2}) \cdots \\
            &(a_{k_1 + k_2+ \cdots + k_{m - 1} + 1}, a_{k_1 + k_2 + \cdots + k_{m - 1} + 2}, \ldots , a_{k_1 + k_2 + \cdots + k_m})\notag
        \end{align*}
        As seen before, we have the following cases.
         \begin{enumerate}
            \item[Case (1).] If $y$ is the product of disjoint cycles of distinct orders, then we have already seen that $y_1*x'$ is order 2. Suppose there exits $y_2$ such that $\Bar{y_2} = y_1*x'$ and the possible cases of $y_2$ are
            \begin{enumerate}
                \item[Case (1.a)] If $\ord(y_2) = 2$, then $\Bar{y_2} = y_2 = y_1*x'$. Since $x = x'\delta$ where $\delta$ is the product of remaining disjoint cycles of $x$ that are not appearing in $x'$. Since the elements appearing in $x'$ are the only possible elements that appear in $y_2$, $\delta$ and $y_2$ do not have elements that appear in the cycles of $\delta$ and also in $y_2$, so $\delta^{-1} y_2 = y_2\delta^{-1}$. Since $y_2 \in S_{\sigma}$, so $xy_2x^{-1} = x'\delta y_2 (x'\delta)^{-1} = x'y_2(x')^{-1} = y_2^{-1}$ and using $y_2 = y_1*x'$, we obtain $x'y_1x'(x')^{-1} = (y_1x')^{-1}$ which implies $y_1^{2} = e$. But $\ord(y_1) \geq 3$, which is a contradiction. Therefore, such a $y_2$ does not exist.
                \item[Case (1.b)] Suppose $\ord(y_2) \neq 2$. If $\bar{y}_2 = y_1 x'$, then right-multiplying by $(x')^{-1}$ recovers $y_1$. To illustrate this, consider the 5-cycle $y_1 = (a_1, a_2, a_3, a_4, a_5)$ and the involution $x' = (a_1, a_5)(a_2, a_4)$. Composing these permutations yields $y_1 x' = (a_2, a_5)(a_3, a_4)$. If we assume $\bar{y}_2 = (a_2, a_5)(a_3, a_4)$, then right multiplying by $(x')^{-1}$ yields $y_2 = y_1 = (a_1, a_2, a_3, a_4, a_5)$.
            \end{enumerate}
            \item[Case (2).] If $y$ is product of disjoint cycles of same order.
            \begin{align*}
                y_1 = (a_1, a_2, \ldots ,a_j)(a_{j + 1}, a_{j + 2}, \ldots, a_{2j})\ldots(a_{(m - 1)j + 1},a_{(m - 1)j + 2}, \ldots, a_{mj})\gamma
            \end{align*} where $\gamma$ is product of disjoint 2-cycles. Assuming $j\ \geq\ 3$ and $m \geq 2$
            \begin{enumerate}
                \item[Case (2.a)] If $j$ is even and $m$ is even then $y_1'$ is $(a_1, a_2, \ldots, a_j)(a_{j + 1}, a_{j + 2}, \ldots, a_{2j})\ldots$\\ $(a_{(m - 1)j + 1},a_{(m - 1)j + 2}, \ldots, a_{mj})$ we already seen the corresponding $x'$ and the value of $y_1*x'$. By construction of the algorithm, $y_1$ is mapped to $\Bar{y_1}$, given by
                \begin{align*}
                    &(a_1, a_{j + 1})(a_{2j + 1}, a_{3j + 1})\ldots (a_{(m - 2)j + 1}, a_{(m - 1)j + 1})\\
                    &(a_2, a_{2j})(a_{2j + 2}, a_{4j})\ldots (a_{(m - 2)j + 2}, a_{mj - 1})\\
                    & \ldots (a_j, a_{j + 2})(a_{3j}, a_{3j + 2})\ldots (a_{(m - 1)j, a_{(m - 1)j + 2}})
                \end{align*}
                Suppose $y_2$ is mapped to $\Bar{y_1}$.
                So, now taking the every starting transposition i.e., $(a_1, a_{j + 1}), (a_2, a_{2j}), (a_3, a_{2j - 2}), \ldots (a_j, a_{j + 2})$ which results $(a_{j + 1}, a_{j + 2}, \ldots, a_{2j})$ and since there should be cycle which maps to its inverse under conjugation of $x$ similarly proceeding it gives the $y_1'$ and $\gamma$ as usual resulting $y_1$. So in this case it is one-one.
                
                \item[Case (2.b)] If $j$ is even and $m$ is odd, we have already seen the values of $x'$ and $y_1'*x'$. Recall the value of $y_1'*x'$ is
                $$\left(\begin{array}{l}
                    (a_1, a_{j + 1}, \ldots ,a_{(m - 1)j + 1})\\
                    (a_2, a_{2j}, a_{2j + 2}, \ldots, a_{(m - 1)j}, a_{(m - 1)j + 2}, a_{j}, a_{j + 2}, \ldots, a_{(m - 2)j + 2}, a_{mj - i + 2})\\
                    \cdots (a_i, a_{2j - i + 2}, a_{2j + i}, \ldots ,a_{(m - 1)j - i + 2}, a_{(m - 1)j + i}, a_{j - i + 2}, a_{j + i}, \ldots ,a_{(m - 2)j + i}, a_{mj - i + 2})\\
                    \cdots (a_{\frac{j + 1}{2}}, a_{2j - \frac{j + 1}{2} + 2}, a_{2j + \frac{j + 1}{2}}, \ldots, a_{(m - 1)j - \frac{j + 1}{2} + 1}, a_{j - \frac{j + 1}{2} + 2}, a_{j + \frac{j + 1}{2}}, \ldots, a_{(m - 2)j + \frac{j + 1}{2}}, a_{mk - \frac{j + 1}{2} + 2})
                \end{array}\right)$$
                By construction of the algorithm, it maps to $\Bar{y_1}$ given by
                \begin{align*}
                    (a_1, a_{j + 1})(a_{2j + 1}, a_{3j + 1})\cdots(a_{(m - 1)j + 1}, a_2)(a_{2j}, a_{2j + 2})\cdots
                \end{align*}
                
                Suppose $y_2$ is maps to $\Bar{y_1}$, since we know $a_1 < a_{j + 1}, a_{2j + 1}< a_{3j + 1}, \ldots, a_{(m - 3)j + 1}< a_{(m - 2)j + 1}$ so we use this information of $\Bar{y_1}$ and extract the information of $y_2$ by using $(x')^{-1}$, we obtain $y_2$ should be of the following form.
                \begin{align*}
                    (a_{j +1}, \underbrace{\ldots}_{?}, a_{2j})(a_{2j + 1},\underbrace{\ldots}_{?}, a_{3j})\cdots (a_{(m - 1) j + 1}, \underbrace{\ldots}_{?}, a_{mj})(a_1, \underbrace{\ldots}_{?}, a_j)
                \end{align*}
                
                Now, the cycle $(a_{(m - 1)j + 1}, a_2)$ has two possibilities.
                \begin{enumerate}
                    \item If $a_{(m - 1)j + 1}< a_2$, then on using $(x')^{-1}$, we obtain $a_j \to a_2$ which contradicts the fact that $a_j \to a_1$. So it is not possible.
                    \item If $a_2 < a_{(m - 1)j + 1}$ then on using $(x')^{-1}$, we obtain $a_{2j - 1} \to a_{(m - 1)j + 1}$ which contradicts the fact that $a_{mj} \to a_{(m - 1)j + 1}$. So not possible.
                \end{enumerate}
                
                So, we expect the merging of odd and even cycles has happened here. Now moving on next $2$-cycle $(a_{2j}, a_{2j + 2})$, we do have two possibilities.
                \begin{enumerate}
                    \item If $a_{2j} < a_{2j + 2}$ then on using $(x')^{-1}$, we obtain
                    \begin{align*}
                        (a_{j + 1}, a_{j + 2}, \underbrace{\ldots}_{?}, a_{2j - 1},a _{2j})(a_{2j + 1}, a_{2j + 2}, \underbrace{\ldots}_{?}, a_{3j - 1}, a_{3j})\cdots\\(a_{(m - 1)j + 1}, a_{(m - 2)j + 2}, \underbrace{\ldots}_{?},a_{mj - 1}, a_{mj})(a_1, a_2, \underbrace{\ldots}_{?}, a_{j - 1}, a_j)
                    \end{align*}
                    \item If $a_{2j + 2} < a_{2j}$ then on using $(x')^{-1}$, we obtain
                    \begin{align*}
                        &(a_{j + 1}, a_{3j + 2}, \underbrace{\ldots}_{?}, a_{4j - 1},a _{2j})(a_{2j + 1}, a_{4j + 2}, \underbrace{\ldots}_{?}, a_{5j - 1}, a_{3j})\cdots(a_{(m - 2)j + 1}, a_{2}, \underbrace{\ldots}_{?}, a_{(m - 1)j})\\&(a_{(m - 1)j + 1}, a_{j + 2}, \underbrace{\ldots}_{?},a_{2j - 1}, a_{mj})(a_1, a_{2j + 2}, \underbrace{\ldots}_{?}, a_{3j - 1}, a_j)
                    \end{align*}
                    using this information, on computing $y_2*x'$ we get
                    \begin{align*}
                        &(a_1, a_{j + 1}, \ldots ,a_{(m - 1)j + 1})\\
                    &(a_2, a_{mj}, a_{2j + 2}, \ldots
                    \end{align*} which implies that $\Bar{y_2} = (a_1, a_{j + 1})(a_{2j + 1}, a_{3j + 1})\cdots(a_{(m - 1)j + 1}, a_2)(a_{mj}, a_{2j + 2})\cdots$ which is not equal to $\Bar{y_1}$ because $(a_{2j}, a_{2j + 2})$ appears in $\Bar{y_1}$ but $(a_{mj}, a_{2j + 2})$ appears in $\Bar{y_2}$. So this is not possible.
                \end{enumerate}
                Similarly, repeating this process we get $y_1 = y_2$.
                \item[Case (2.c)] If $j$ is odd and $m$ is even, we already seen the values of $x'$ and $y_1*x'$. By construction of the algorithm,
                $y_1$ is mapped to $\Bar{y_1}$, given by
                \begin{align*}
                    (a_1, a_{j + 1})(a_{2j + 1}, a_{3j + 1}) \cdots (a_{(m - 2)j + 1)},a_{(m - 1)j + 1})(a_2, a_{2j})\cdots\gamma
                \end{align*}
                Suppose $y_2$ is mapped to $\Bar{y_1}$.
                So, now taking the every starting transposition i.e $(a_1, a_{j + 1}), (a_2, a_{2j}), (a_3, a_{2j - 2}), \ldots (a_j, a_{j + 2})$ which results $(a_{j + 1}, a_{j + 2}, \ldots, a_{2j})$ and since there should be cycle which maps to its inverse under conjugation of $x$ similarly proceeding it gives the $y_1'$ and $\gamma$ as usual resulting $y_1$. So in this case it is one-one.
            \end{enumerate}
        \end{enumerate}

    \begin{theorem}\label{identity_greater_other}
        Let $\sigma$ be an an outer automorphism for $\mathfrak{S}_6$ or a inner automorphism of $\mathfrak{S}_n$ corresponding to a permutation whose order is not a multiple of 4. The number of involutions of identity automorphism is greater than or equal to the number of involutions of $\sigma$.
    \end{theorem}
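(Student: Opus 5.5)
The quickest route would be to combine Theorem \ref{half_main_result} with Lemma \ref{frobenius_Schur}: for any $\sigma\in\mathrm{Aut}(\mathfrak{S}_n)$ we get $|S_\sigma|\le T(\mathfrak{S}_n)=|S_{id}|$. In keeping with the stated intention, however, I will build an explicit injection $f\colon S_\sigma\to S_{id}$ in each case the theorem covers, and fall back on the character-theoretic bound only if a combinatorial step cannot be closed.

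\textbf{Case 1: $\sigma$ inner with $\ord(x)$ odd.} Write $\sigma(y)=xyx^{-1}$. For $n\neq 6$ with $\ord(x)\ge 3$, Lemma \ref{all_involutions_of_greater_order_3} shows that every $y\in S_\sigma$ has order $1$ or $2$. Such a $y$ already lies in $S_{id}$, so the inclusion map is the injection. If $x=e$ then $\sigma=id$ and there is nothing to prove. For $n=6$ with $\sigma$ inner, the same cycle argument applies verbatim, since it only uses that $\sigma$ is conjugation by $x$. Alternatively, one can check $\mathfrak{S}_6$ directly, since it is finite.

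\textbf{Case 2: $\sigma$ inner with $\ord(x)\equiv 2 \pmod 4$.} Write $x=tu$, where $t=x^{\ord(x)/2}$ is a product of disjoint transpositions and $u=x^{\ord(x)/2+1}$ has odd order, with $t$ and $u$ commuting powers of $x$. For $y\in S_\sigma$, I will decompose $y$ by its action on the orbits of $\langle x\rangle$, following Lemma \ref{all_involutions_of_greater_order_3}. On the points moved only by odd-length cycles of $x$, the components of $y$ are forced to be involutions by the parity argument of that lemma. On the remaining support, $x$ acts like the involution $t$ up to an odd-order twist.

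The candidate map is $f(y)=yx'$, exactly as in Case (1) of the algorithm. Here $x'$ is the product of the cycles of $x$ that meet the support of the part of $y$ of order at least $3$. Since $x'yx'^{-1}=y^{-1}$, the element $(yx')^2=y\,x'yx'^{-1}\,x'^2$ reduces to an involution whenever $x'^2$ acts trivially there. Injectivity then follows as in the ``One-one'' subsection: $y$ is recovered from $f(y)$ by right multiplication by $x'^{-1}$, and $x'$ is determined by $x$ together with the support of $f(y)$. Involutions of $y$ are sent to themselves, and Case (1.a) of the one-one argument rules out collisions between the two kinds of images.

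\textbf{Case 3: $\sigma$ outer on $\mathfrak{S}_6$.} By Lemma \ref{all_outer_automorphisms_forms_of_S6}, $\sigma$ sends $(1,i)$ to a syntheme from one of the six pentads in Lemma \ref{basis_for_outer_automorphisms_of_S6}. Every outer automorphism is $\gamma\circ\iota_g$ for a fixed outer $\gamma$ and some inner $\iota_g$. Moreover, $|S_\sigma|$ is invariant under replacing $\sigma$ by $\iota_h\sigma\iota_h^{-1}$, since $y\mapsto h y h^{-1}$ is then a bijection of the corresponding sets. So only finitely many outer classes remain, indexed by the conjugacy classes of $g$ twisted by $\gamma$. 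For each representative, I will use the pentad description to compute $S_\sigma$ by cycle type: $\sigma$ swaps the classes of transpositions and synthemes, and of $3$-cycles and products of two $3$-cycles, so the condition $\sigma(y)=y^{-1}$ restricts $y$ to $\sigma$-stable classes. I then compare $|S_\sigma|$ with $|S_{id}|=76$, the number of standard Young tableaux of size $6$ by Lemma \ref{frobenius_Schur}.

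\textbf{Main obstacle.} The hardest step is Case 2. When several cycles of $y$ of equal length are permuted among themselves by $x$, the element $yx'$ need not be an involution, and the algorithm's recombination step (the map $comb$) would be needed. I will first try to reduce to $t$ by showing that $y$ commutes with $u$ on each orbit. If that fails, I will invoke the Theorem \ref{half_main_result} bound for the residual subcase.
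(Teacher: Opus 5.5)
Your opening sentence already proves the statement, modulo Theorem~\ref{half_main_result}; the paper itself remarks this. Your Case~1 also matches the paper's Case~(ii). The explicit injection you commit to, however, has a genuine gap in Case~2.

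\textbf{The map in Case~2 does not land in $S_{id}$.} If $x'$ is a product of cycles of $x$ with $x'yx'^{-1}=y^{-1}$, then $(yx')^2=y\,(x'yx'^{-1})\,x'^2=x'^2$ exactly. So $yx'$ is an involution only when every cycle of $x$ meeting the long part of $y$ has length at most $2$, and this fails in general once $\ord(x)>2$. For example, take $x=(1,2,3,4,5,6)$ and $y=(1,3,5)(2,6,4)$. Then $xyx^{-1}=y^{-1}$ and $x'=x$, but $(yx')^2=x^2=(1,3,5)(2,4,6)\neq e$. So the real obstruction is that $x'$ is not an involution, not that equal-length cycles of $y$ are permuted. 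The one-one argument you borrow (Case~(1.a)) also silently uses $x'^2=e$.

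\textbf{The orbit decomposition is also false.} For $x=(1,2,3)(4,5,6,7,8,9)$, the element $y=(1,9,6)(2,7,4)(3,5,8)$ lies in $S_\sigma$, yet every cycle of $y$ meets both cycles of $x$. So $y$ does not split along the odd and even cycles of $x$.

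\textbf{The missing idea.} It is the test you postponed to your last paragraph, and it needs no orbit analysis. For every $y\in S_\sigma$ one has $x^2yx^{-2}=xy^{-1}x^{-1}=y$. Your $u=x^{\ord(x)/2+1}$ is an even power of $x$, so it centralizes $y$. Hence $tyt=xyx^{-1}=y^{-1}$, and $(ty)^2=(tyt)\,y=e$. Thus $y\mapsto ty=x^{\ord(x)/2}y$ maps $S_\sigma$ into $S_{id}$, and it is injective because it is left multiplication by a fixed element. This is exactly the paper's Case~(iii); the map $y\mapsto xy$ for $\ord(x)=2$ is its special case.

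The same remark proves Case~1 without the long case analysis of Lemma~\ref{all_involutions_of_greater_order_3}, and for every $n$ including $6$. If $\ord(x)$ is odd, $x$ is a power of $x^2$, so $y=xyx^{-1}=y^{-1}$.

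\textbf{Case~3 and the fallback.} Your Case~3 is only a plan. Note that the class of $6$-cycles and the class of type $3\,2\,1$ are also swapped by $\sigma$. The paper settles this case by a Magma computation, which gives $\max|S_\sigma|=36<76$. If you keep the character-theoretic fallback, be aware that it rests on Lemma~\ref{bound_of_tfsi}. Its Cauchy--Schwarz step bounds $\frac{1}{|G|}\sum_g\chi(g)\chi(\alpha(g))$ rather than $\iota_\alpha(\chi)$. The inequality $|\iota_\alpha(\chi)|\le 1$ is true, via Schur orthogonality, but the paper's proof of this theorem does not depend on it.
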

    \begin{proof}
        Given $\sigma \in Aut(\mathfrak{S}_n)$. Consider the set $S_{\sigma}$ corresponding to $\sigma$ and given by,
        \begin{align*}
            S_{\sigma} =\{ y \in \mathfrak{S}_n \ | \ \sigma(y) = y^{-1}\}.
        \end{align*}
        And the following set corresponding to identity automorphism $I : \mathfrak{S}_n \rightarrow \mathfrak{S}_n$ defined as $I(x) = x $ for all $x \in \mathfrak{S}_n$.
        \begin{align*}
            S_I = \{y \in \mathfrak{S}_n \ | \ \text{ord(y)} = 2  \text{ or } \text{ord(y) } = 1\},
        \end{align*}
        \begin{enumerate}
            \item For $n = 1$, $S_{\sigma} = S_I = \{e\}$
            \item For $n = 2,\ $ $S_{\sigma} = S_I = \{e, (1,2)\}$. 
            \item For $n \geq 3$ and $\sigma(y) = xyx^{-1}$ for some $x \in \mathfrak{S}_n$. Consider the following map $\alpha: S_\sigma\rightarrow S_I$ defined by
            \begin{align*}
                \alpha(y) = 
                    \begin{cases}
                        xy, & \text{ if ord(x) = 2}\\
                        xyx^{-1}, & \text{ if ord(x)} \geq 3 \text{ \& ord(x) is odd}\\
                        x^{\text{$\frac{ord(x)}{2}$}}y & \text{ if ord(x)} \geq 3 \text{ \& ord(x)/2 is odd}
                    \end{cases}
            \end{align*}
            If we can prove $\alpha$ is well defined and one-one, then these two results will pave a path for the desired one i.e., $|S_{\sigma}| \leq |S_I|$, for every $\sigma$ an inner automorphism o f $\mathfrak{S}_n$ corresponding to a permutation whose order is not a multiple of 4.
            \begin{itemize}
                \item[Case (i)] If $\ord (x) = 2$, then $\alpha(y) = xy$. On observation, we have $\alpha(e) = x$ and $\alpha(x) = e$. We claim that $\alpha$ is one-one. Let $y_1, y_2 \in \mathfrak{S}_n$. Suppose
                \begin{align*}
                    \alpha(y_1) = \alpha(y_2)
                    \implies x_1y_1  = x_1y_2
                    \implies y_1 = y_2
                \end{align*}
                Hence $\alpha$ is one-one. Now, we claim that $\alpha(y)$ is order 1 or order 2 element. Let $y \in \mathfrak{S}_n$.
                \begin{align*}
                    \alpha(y)\alpha(y) &= (x \cdot y)\cdot(x \cdot y)\\
                    (x \cdot y)\cdot(x_1 \cdot y) &= (x \cdot y)\cdot(y^{-1} \cdot x) \ (\because\text{ $xyx^{-1} = y^{-1}$})\\
                    \implies &= x^2
                    \implies = e
                \end{align*}
                We have $\ord(\alpha(y)) = 2 \text{ or } 1$, whenever $ord(x) = 2$. Furthermore, the map $\alpha$ is surjective.
                \item[Case (ii)] If $\ord(x) \geq 3$ and $\ord(x)$ is odd then $ \alpha(y) = xyx^{-1}$. We claim that $\alpha$ is one-one. Let $y_1, y_2 \in \mathfrak{S}_n$. Suppose
                \begin{align*}
                    \alpha(y_1) = \alpha(y_2)
                    \implies xy_1x^{-1} = xy_2x^{-1}
                    \implies y_1 = y_2
                \end{align*}
                Hence $\alpha$ is one-one. Now, we claim that $\alpha(y)$ is order 1 or order 2 element. Let $y \in \mathfrak{S}_n$.
                \begin{align*}
                    (x \cdot y \cdot x^{-1})\cdot(x \cdot y \cdot x^{-1}) &= xy^2x^{-1}\\
                    \implies &= xex^{-1} (\text{From Lemma \ref{all_involutions_of_greater_order_3}})\\
                    \implies &= e
                \end{align*}
                We have $\ord(\alpha(y)) = 2 \text{ or } 1$, whenever $\ord(x) \geq 3$ and $\ord(x)$ is odd.
                \item[Case (iii)] if $\ord(x) \geq 3$ and $\ord(x)/2$ is odd then $\alpha(y) = x^{\frac{\ord(x)}{2}}y$. We claim that $\alpha$ is one-one. Let $y_1, y_2 \in \mathfrak{S}_n$. Suppose
                \begin{align*}
                    \alpha(y_1) = \alpha(y_2) \implies x^{\frac{\ord(x)}{2}}y_1 = x^{\frac{\ord(x)}{2}}y_2 \implies y_1 = y_2
                \end{align*}
                Hence $\alpha$ is one-one. Now, we claim that $\alpha(y)$ is order 1 order 2 element. Let $y \in \mathfrak{S}_n$.
                \begin{align*}
                    (x^{\frac{\ord(x)}{2}}y) \cdot (x^{\frac{\ord(x)}{2}}y) &= (x^{\frac{\ord(x)}{2}}y) \cdot e \cdot (x^{\frac{\ord(x)}{2}}y)\\
                    &=  (x^{\frac{\ord(x)}{2}}y) \cdot x^{-\frac{\ord(x)}{2}} \cdot x^{-\frac{\ord(x)}{2}} \cdot (x^{\frac{\ord(x)}{2}}y)\\
                    \implies &= (x^{\frac{\ord(x)}{2}} \cdot y \cdot x^{-\frac{\ord(x)}{2}} \cdot y)\  (\because \frac{\ord(x)}{2} \text{ is odd})\\
                    \implies &= y^{-1}\cdot y\\
                    \implies &= e
                \end{align*}
                We have $\ord(\alpha(y)) = 2 \text{ or } 1$, whenever $\ord(x) \geq 3$ and $\ord(x)/2$ is odd.
            \end{itemize}
            \item For $n \geq 3$, Lemma \ref{all_complete} restricts the existence of outer automorphisms to the case $n=6$. The explicit forms of these outer automorphisms are described in Lemma \ref{all_outer_automorphisms_forms_of_S6}. Using the computational algebra system Magma \cite{MR1484478} (refer to APPENDIX), we computed the number of involutions for all such automorphisms. The results indicate a maximum of 36 involutions, which is strictly less than 76, the number of involutions for the identity automorphism of $\mathfrak{S}_6$.
        Since $|S_{\sigma}| \leq |S_I|, \forall \ \sigma \in \text{Aut}(\mathfrak{S}_n)$ which concludes our argument.
        \end{enumerate}
    \end{proof}

    \begin{corollary}
        Let $x$ be an element of $\mathfrak{S}_n$ corresponds to an inner automorphism $\sigma$ of $\mathfrak{S}_n$. If $\ord(x) = 2$, then $|S_{\sigma}| = |S_I|$.
    \end{corollary}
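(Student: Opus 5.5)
Let $x\in\mathfrak{S}_n$ satisfy $x^2=e$ with $x\neq e$, and let $\sigma(y)=xyx^{-1}$. The plan is to exhibit an explicit bijection $S_\sigma\to S_I$, using the map $\alpha(y)=xy$ from Case (i) of the proof of Theorem \ref{identity_greater_other}.

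The key point is that for an involution $x$, membership in $S_\sigma$ and in $S_I$ are exchanged by left multiplication by $x$. Since $x^{-1}=x$, we have
\begin{align*}
y\in S_\sigma \iff xyx=y^{-1} \iff (xy)(xy)=e \iff xy\in S_I .
\end{align*}
The middle equivalence comes from multiplying $xyx=y^{-1}$ on the right by $y$, which gives $xyxy=e$. Every step is reversible.

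It follows that $\alpha:S_\sigma\to S_I$, $y\mapsto xy$, is well defined. It is also surjective: given $z\in S_I$, set $y=xz$; then $xy=z\in S_I$, so $y\in S_\sigma$ by the displayed equivalence. Injectivity is immediate, because left multiplication by $x$ is a bijection of $\mathfrak{S}_n$ with inverse $z\mapsto xz$. Hence $|S_\sigma|=|S_I|$.

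The case $x=e$ gives $\sigma=I$, which is trivial. As a sanity check, the same conclusion follows from Lemma \ref{frobenius_Schur} combined with Lemma \ref{involution_set_relation}. For inner $\sigma$ one has $\iota_\sigma(\chi)=\frac{1}{|G|}\sum_g\chi(gxgx^{-1})$. Substituting $h=gx$ gives $\iota_\sigma(\chi)=\frac{1}{|G|}\sum_h\chi(h^2)\cdot$, up to the factor $x^{-2}=e$, which is the ordinary indicator; this equals $1$ for every $\chi$, since all representations of $\mathfrak{S}_n$ are real. This second route is not needed, however.

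There is no real obstacle. The only point needing care is using $x^{-1}=x$ to get the exact equivalence $y\in S_\sigma\iff xy\in S_I$, so that the map is onto rather than merely one-one.
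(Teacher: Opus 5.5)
Your argument is correct and is essentially the paper's proof: the corollary rests on Case (i) of the proof of Theorem \ref{identity_greater_other}, which uses the same map $\alpha(y)=xy$. Your equivalence $y\in S_\sigma\iff xy\in S_I$, obtained from $x^{-1}=x$, also actually proves the surjectivity that the paper only asserts there, so your version is the more complete of the two.
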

    
    Our analysis concludes that the unexplored automorphisms of $\mathfrak{S}_n$ are the inner automorphisms defined by a permutation $x$ where $\operatorname{ord}(x) \geq 3$ and $\operatorname{ord}(x)/2$ is even. An explicit representation of this automorphisms yields complex structure with numerous subcases. While the algorithm proposed in $\S \ref{algorithm}$ resolves a subset of these instances, the remaining subcases in this category are left as open problems. However, combining the relations from Lemma \ref{frobenius_Schur}, with the generalization provided by Theorem \ref{half_main_result}, we obtain the following observations regarding the involutions of $\mathfrak{S}_n$.
    \begin{corollary}
         Let $n$ be a natural number with $n > 3$ and $\sigma : \mathfrak{S}_n\to\mathfrak{S}_n$ an automorphism then the number of involutions of $\sigma$ is not more than $\frac{n!}{2}$.
    \end{corollary}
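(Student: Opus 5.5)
The plan is to combine the general bound of Theorem \ref{half_main_result} with Lemma \ref{frobenius_Schur}, and then to estimate the involution count of $\mathfrak{S}_n$ directly.

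First, for any $\sigma\in\mathrm{Aut}(\mathfrak{S}_n)$, Theorem \ref{half_main_result} gives $|S_\sigma|\le T(\mathfrak{S}_n)$. This covers inner automorphisms and the outer automorphisms of $\mathfrak{S}_6$ alike, so no case split on $\sigma$ is needed. By Lemma \ref{frobenius_Schur}, $T(\mathfrak{S}_n)=|S_I|=a_n$, the number of elements $g$ with $g^2=e$. The statement therefore reduces to the purely numerical inequality
\begin{equation*}
a_n\le \frac{n!}{2}\qquad (n\ge 4).
\end{equation*}

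Second, I will prove this inequality by strong induction, using the recurrence $a_n=a_{n-1}+(n-1)a_{n-2}$ (with $a_0=a_1=1$) recalled in the introduction. Since the recurrence reaches back two steps, I need two base cases, $n=4$ and $n=5$. These come from $a_2=2$, $a_3=4$, $a_4=10$, $a_5=26$, and indeed $10\le 12$ and $26\le 60$.

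For $n\ge 6$, the hypothesis applies to both $n-1\ge 5$ and $n-2\ge 4$. It gives
\begin{equation*}
a_n\le \frac{(n-1)!}{2}+(n-1)\frac{(n-2)!}{2}=(n-1)!\le \frac{n!}{2},
\end{equation*}
where the last step holds because $n\ge 2$. This completes the induction, and hence $|S_\sigma|\le a_n\le n!/2$.

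No step is a real obstacle; the only care needed is in the indexing of the base cases. The inductive step for $n$ uses the hypothesis at $n-2$, so that index must itself be at least $4$, which forces the base cases $n=4$ and $n=5$. The bound genuinely fails at $n=3$, where $a_3=4>3$, which explains the hypothesis $n>3$. As an alternative to the induction, one could bound the explicit sum of Remark \ref{number_of_second_order_terms} term by term, but the recurrence route is shorter and is the one I would follow.
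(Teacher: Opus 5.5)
Your proposal is correct. Its reduction is the same one the paper uses: the paragraph preceding the corollary combines Theorem \ref{half_main_result} with Lemma \ref{frobenius_Schur}, leaving only the numerical inequality $a_n=|S_I|\le n!/2$ for $n\ge 4$.

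The difference is in how that inequality is proved.
\begin{itemize}
\item The paper works from the explicit class sizes in Remark \ref{number_of_second_order_terms}. It factors $n!/2$ out of the sum over the number $m$ of transpositions and compares the result with a geometric series.
\item You induct on the recurrence $a_n=a_{n-1}+(n-1)a_{n-2}$ with base cases $n=4,5$. This is shorter, and it even gives $a_n\le (n-1)!$ for $n\ge 6$.
\end{itemize}

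The paper's route avoids induction and, done carefully, gives the explicit estimate $a_n-1\le \frac{n!}{2}\bigl(1-2^{-\lfloor n/2\rfloor}\bigr)$. However, the bookkeeping is error-prone, and the printed version has slips:
\begin{itemize}
\item After factoring out $n!/2$, the $m$-th term is $\frac{1}{2^{m-1}m!\,(n-2m)!}$. So the $m=1$ term is $\frac{1}{(n-2)!}$, not $\frac{1}{2(n-2)!}$ as printed.
\item The stated intermediate bound $\frac{n!}{2}\bigl(1-2^{1-n/2}\bigr)$ equals $6$ at $n=4$, whereas $\mathfrak{S}_4$ has $9$ elements of order $2$.
\end{itemize}
The argument is repaired by bounding each term by $2^{-m}$, using $m!\,(n-2m)!\ge 2$ for $n\ge 4$.

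Your argument sidesteps all of this. The cost is that it relies on the recurrence, which the paper only quotes in the introduction. That recurrence has a one-line proof: the point $n$ is either fixed or swapped with one of the other $n-1$ points.
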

    \begin{proof}
        The proof is divided into two parts
        \begin{itemize}
            \item[Case (i)] If $n$ is even. 
            From Remark \ref{number_of_second_order_terms}, the number of elements of order 2 is equal to \\$\Sigma_{k = 0}^{\frac{n}{2} - 1} \frac{n!}{2^{\frac{n - 2k}{2}}\left(\frac{n - 2k}{2}\right)!\left(2k\right)!} $. On expanding, we get
            \begin{align*}
                 &= \frac{n!}{2^{\frac{n}{2}}(\frac{n}{2})!} + \frac{n!}{2^{\frac{n - 2}{2}}(\frac{n - 2}{2})!2!} + \cdots + \frac{n!}{2(n - 2)!}\\
                &= \frac{n!}{2}\left(\frac{1}{2^{\frac{n - 2}{2}}(\frac{n}{2})!} + \frac{1}{2^{\frac{n - 2}{2}}(\frac{n - 2}{2})!2!} + \cdots + \frac{1}{2(n - 2)!}\right)\\
                & < \frac{n!}{2}\left(\frac{1}{2\cdot 2^{\frac{n - 2}{2}}} + \frac{1}{2\cdot 2^{\frac{n - 4}{2}}} + \cdots + \frac{1}{2\cdot 2}\right)\ (\because n \geq 4)\\
                &= \frac{n!}{2}\cdot\frac{1}{2}\left(\frac{1}{2^\frac{n - 2}{2}} + \frac{1}{2^\frac{n - 4}{2}} + \cdots + \frac{1}{2} + 1)\right)\\
                &= \frac{n!}{2}\cdot\frac{1}{2}\left(\frac{1 - (\frac{1}{2})^{\frac{n}{2} - 1}}{\frac{1}{2}})= \frac{n!}{2}(1 - (\frac{1}{2})^{\frac{n}{2} - 1}\right) < \frac{n!}{2} \\
                 \implies 1 + & \Sigma_{k = 0}^{\frac{n}{2} - 1} \frac{n!}{2^{\frac{n - 2k}{2}}\left(\frac{n - 2k}{2}\right)!\left(2k\right)!} \leq \frac{n!}{2}
            \end{align*}
            \item[Case (ii)] If $n$ is odd. 
            From Remark \ref{number_of_second_order_terms}, the number of elements of order 2 is equal to \\
            $\Sigma_{k = 0}^{\frac{n - 1}{2} - 1} \frac{n!}{2^{\frac{n - (2k + 1)}{2}}\left(\frac{n - (2k + 1)}{2}\right)!\left(2k + 1\right)!}$. On expansion, we obtain
            \begin{align*}
                &= \frac{n!}{2^{\frac{n - 1}{2}}(\frac{n - 1}{2})!} + \frac{n!}{2^{\frac{n - 3}{2}}(\frac{n - 3}{2})!3!} + \cdots + \frac{n!}{2(n - 2)!}\\
                &= \frac{n!}{2}\left(\frac{1}{2^{\frac{n - 3}{2}}(\frac{n - 3}{2})!} + \frac{1}{2^{\frac{n - 5}{2}}(\frac{n - 3}{2})!2!} + \cdots + \frac{1}{2(n - 2)!}\right)\\
                & < \frac{n!}{2}\left(\frac{1}{2\cdot 2^{\frac{n - 3}{2}}} + \frac{1}{2\cdot 2^{\frac{n - 5}{2}}} + \ldots + \frac{1}{2\cdot 2}\right)\ (\because n \geq 4)\\
                &= \frac{n!}{2}\cdot\frac{1}{2}\left(\frac{1}{2^\frac{n - 3}{2}} + \frac{1}{2^\frac{n - 5}{2}} + \ldots + \frac{1}{2} + 1\right)\\
                &= \frac{n!}{2}\cdot\frac{1}{2}\left(\frac{1 - (\frac{1}{2})^{\frac{n - 1}{2} - 1}}{\frac{1}{2}}\right)= \frac{n!}{2}\left(1 - \left(\frac{1}{2}\right)^{\frac{n - 1}{2} - 1}\right) < \frac{n!}{2}\\
                \implies 1 + &\Sigma_{k = 0}^{\frac{n - 1}{2} - 1} \frac{n!}{2^{\frac{n - (2k + 1)}{2}}\left(\frac{n - (2k + 1)}{2}\right)!\left(2k + 1\right)!} \leq \frac{n!}{2}
            \end{align*}
        \end{itemize}
        Cases (i, ii) gives us the desired results as required which finish our argument.
    \end{proof}

    \begin{corollary}
        $\mathfrak{S}_3$ is the only non-abelian symmetric group which has involutions more than $\frac{1}{2}$ number of elements in the group.
    \end{corollary}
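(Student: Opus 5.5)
The plan is a direct case split on $n$: the previous corollary already handles every $n > 3$, so the only real work is an explicit count for $\mathfrak{S}_3$. I read ``has involutions more than $\frac{1}{2}$ number of elements'' as: there is an automorphism $\sigma$ of $\mathfrak{S}_n$ with $|S_\sigma| > \frac{n!}{2}$. The identity automorphism is included, and then $S_I$ is the set of elements of order $1$ or $2$.

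First I would fix which symmetric groups are non-abelian. $\mathfrak{S}_1$ and $\mathfrak{S}_2$ are abelian, being of order $1$ and $2$. For $n \geq 3$ the transpositions $(1,2)$ and $(2,3)$ do not commute, since $(1,2)(2,3) = (1,2,3) \neq (1,3,2) = (2,3)(1,2)$. So the non-abelian symmetric groups are exactly those with $n \geq 3$.

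Next I would show that $\mathfrak{S}_3$ has the property. Take $\sigma = I$. Then $S_I = \{e, (1,2), (1,3), (2,3)\}$, so $|S_I| = 4 > 3 = \frac{3!}{2}$. Equivalently, by Lemma \ref{frobenius_Schur}, $|S_I| = T(\mathfrak{S}_3) = 1+2+1 = 4$, which matches the worked hook-length example in the preliminaries.

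Finally I would rule out every $n \geq 4$ using the preceding corollary. That corollary states that for $n > 3$ and any automorphism $\sigma$ of $\mathfrak{S}_n$, the number of involutions of $\sigma$ is at most $\frac{n!}{2}$. Its proof chains Theorem \ref{half_main_result} ($|S_\sigma| \le T(\mathfrak{S}_n)$), Lemma \ref{frobenius_Schur} ($T(\mathfrak{S}_n) = |S_I|$) and the count in Remark \ref{number_of_second_order_terms}. Hence no $\sigma$ gives more than half the group order when $n \geq 4$, and combining the three steps proves the statement.

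The main obstacle is not in this argument but in relying on the preceding corollary. Its geometric-series estimate is loose, so I would sanity-check small cases directly. For $n=4$, $|S_I| = 1+6+3 = 10 \le 12$. For $n=5$, $|S_I| = 26 \le 60$. For $n=6$, $|S_I| = 76 \le 360$. These confirm the bound where it is tightest. For large $n$ the standard asymptotics of $a_n$ make the inequality overwhelmingly true.
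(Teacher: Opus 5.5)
Your proposal is correct and is essentially the argument the paper intends. The paper states this corollary without proof, right after the corollary giving $|S_\sigma|\le |S_I| = T(\mathfrak{S}_n)\le \frac{n!}{2}$ for all $n>3$ (via Theorem \ref{half_main_result} and Lemma \ref{frobenius_Schur}); like you, it then needs only the direct count $|S_I| = 4 > 3$ for $\mathfrak{S}_3$ and the remark that $\mathfrak{S}_1,\mathfrak{S}_2$ are abelian. Your small-case checks are a sensible safeguard. The paper's geometric-series estimate rests on the termwise bound $\frac{n!}{2^{t}t!(n-2t)!}\le \frac{n!}{2}\cdot 2^{-t}$ for involutions with $t$ transpositions. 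That bound is equivalent to $t!(n-2t)!\ge 2$, which holds for every $t$ once $n\ge 4$ and fails at $n=3$, despite some off-by-one slips in the paper's displayed algebra.
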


    So far we have observed structural properties of symmetric group, now we proceed to understand its representations. The following theorem provides a formula for the dimensions of all irreducible representations of the symmetric group using the hook length formula. The obtained formula leads to several combinatorial problems involving partitions that sum to 'nice expressions.' While we have analyzed these patterns for $n \leq 11$, several questions remain. In particular, our observations lead to a formal conjecture [see Theorem \ref{open_question_for_formula}], while general case is left for future investigation.
     \begin{theorem}\label{sodoicrs}
        The sum of degrees of irreducible complex representations of $\mathfrak{S}_n$ is
        \begin{align*}
            1 + \Sigma_{i_1, i_2, i_3, \cdots i_n}\frac{n!}{\Pi_{l = 1}^{{\delta}_n}\Pi_{j = 1}^{i_{l}} [(i_l - j) + 1 + {\Large\Gamma}_l^j]}
        \end{align*} where $i_1 + i_2 + i_3 + \cdots + i_n = n,\ n > i_1 \geq i_2 \geq \cdots \geq i_n \geq 0; \quad {\Large\delta}_n = \Sigma_{k = 1}^{n}{\Large\Lambda}_k$ where
        
        \begin{align*}
            {\Large\Omega}_j(i_k) = \begin{cases}
                    1, \quad i_k \geq j\\
                        0 \quad \text{ otherwise }
            \end{cases}
        \end{align*}
        \begin{align*}
            {\Large\Lambda}_k = \begin{cases}
                    1,  \quad i_k > 0\\
                        0  \quad \text{ otherwise }
            \end{cases}
        \end{align*} 
        \begin{align*}
            {\Large\Gamma}_l^j = \begin{cases}
                    \Sigma_{k = l + 1}^{n}{\Large\Omega}_j(i_k),  \quad  l + 1 \leq n\\
                        \quad \quad \quad 0  \quad\qquad \text{ otherwise }
            \end{cases}
        \end{align*}
    \end{theorem}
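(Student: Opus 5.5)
The plan is to show that the displayed expression is exactly $\sum_{\lambda\vdash n} f^\lambda$, with $f^\lambda$ computed by the Hook Length Formula \cite{MR1824028}*{Theorem 3.10.2}, and with the single partition excluded by the constraint $i_1<n$ (namely $\lambda=(n)$, for which $f^{(n)}=1$) accounting for the leading $1$. Since $T(\mathfrak{S}_n)=\sum_{\lambda\vdash n}f^\lambda$ by the correspondence between irreducible representations and Young diagrams (Specht modules) recalled in the preliminaries, the proof reduces to checking that the inner product equals $\prod_{(l,j)\in\lambda}h_{l,j}$ for each admissible tuple $(i_1,\dots,i_n)$.

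First I will set up the indexing. A tuple with $i_1\ge\cdots\ge i_n\ge 0$ and $\sum_k i_k=n$ is exactly a partition $\lambda$ of $n$ padded with zeros, and the tuples are in bijection with partitions. Under this identification $\Lambda_k=1$ precisely when row $k$ is nonempty, so $\delta_n=\sum_k\Lambda_k=\ell(\lambda)$ is the number of rows. Because the parts are weakly decreasing, the nonzero parts are exactly $i_1,\dots,i_{\delta_n}$. Hence the double product runs over $l=1,\dots,\delta_n$ and $j=1,\dots,i_l$, that is, over every box $(l,j)$ of the Young diagram, each box appearing exactly once.

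Next I will identify each factor with a hook length. For a box $(l,j)$, the number of boxes to its right is the arm $i_l-j$. The number of boxes below it is the leg, which counts the rows $k>l$ with $i_k\ge j$; this is $\sum_{k=l+1}^n\Omega_j(i_k)=\Gamma_l^j$ when $l<n$, and $0$ when $l=n$, matching the definition of $\Gamma$. So the factor $(i_l-j)+1+\Gamma_l^j$ is exactly $h_{l,j}$, and the summand equals $n!/\prod h_{l,j}=f^\lambda$.

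Finally I will handle the boundary term. The constraint $n>i_1$ removes only the tuple $(n,0,\dots,0)$, which is the one-row shape. Its hook product is $n!$, so $f^{(n)}=1$, and this accounts for the added $1$. Summing over all tuples then gives $\sum_{\lambda\vdash n}f^\lambda=T(\mathfrak{S}_n)$. As a cross-check, this equals the number of elements of $\mathfrak{S}_n$ of order at most $2$ by Lemma \ref{frobenius_Schur}.

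The main obstacle is purely bookkeeping. One point is the leg count $\Gamma_l^j$: since the sum runs over all $k>l$ including zero parts, I must confirm that zero parts contribute nothing, and they do because $\Omega_j(0)=0$ for $j\ge1$. The other point is degenerate $n$. For $n=1$ the sum is empty and the formula gives $1$, which is correct. I will state explicitly that an empty sum is zero, so that small cases need no separate treatment.
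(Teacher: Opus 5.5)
Your proposal is correct and follows the paper's proof essentially step for step. You identify admissible tuples $(i_1,\dots,i_n)$ with partitions of $n$ other than $(n)$, read $\delta_n$ as the number of nonzero rows and $(i_l-j)+1+\Gamma_l^j$ as the hook length $h_{l,j}$ (arm $i_l-j$, leg counted by $\Gamma_l^j$), apply the Hook Length Formula, and account for the excluded one-row shape with the leading $1$. Your extra checks, that zero parts contribute nothing to $\Gamma_l^j$ because $\Omega_j(0)=0$ and that the sum is empty for $n=1$, are details the paper leaves implicit; they do not change the argument.
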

    \begin{proof}
        From \cite{MR1824028}*{Theorem 3.10.2}, the dimension of irreducible representation corresponding to the partition $\lambda$ of $n$ is $\frac{n!}{\Pi_{i, j}h_{i, j}}$, where $h_{i,j}$ is hook length of $(i,j)^{th}$ box of young diagram corresponding to a partition of $n$. Consider the partitions $\lambda = (i_1, i_2, \ldots, i_n)$ of $n$ satisfying
        \begin{align*}
            i_1 + i_2 + i_3 + \cdots + i_n = n,\ n > i_1 \geq i_2 \geq \cdots \geq i_n \geq 0;
        \end{align*}
        On viewing the partition $\lambda = (i_1, i_2, \ldots, i_n)$ as young diagram we have 
        \[
            \begin{array}{c c c c c}
            \ydiagram{4} & \cdots &\ydiagram{3} & \cdots & i_1 \text{ boxes}\\
            \ydiagram{4} & \cdots &\ydiagram{3} & \cdots & i_2 \text{ boxes}\\
            \vdots & \ddots & \vdots & \vdots & \vdots\\
            \ydiagram{4} & \cdots & \ydiagram{3}& \cdots & i_n \text{ boxes}
            \end{array}
        \]
        Assuming the young diagram indexed by $(1, 1), \cdots, (1, i_1)$ for the first row and similarly for other rows. 
        Consider the function ${\Omega}_j(i_k)$ defined by
        \begin{align*}
            {\Large\Omega}_j(i_k) = \begin{cases}
                    1, \quad i_k \geq j\\
                        0, \quad \text{ otherwise }
            \end{cases}
        \end{align*}
        ${\Large\Omega}_j(i_k)$ specifically determines whether the $k^{th}$ row has a box at the $j^{th}$ column. As we check to the rows bottom to the $l^{th}$ row and we need to check from $i_{l + 1}$ to $i_n$ and also if we are dealing with $i_n$ row is not necessary to check the number of elements bottom to it. So we introduce the following ${\Gamma}_l^j$ function which counts the number of bottom boxes of a given index.
        \begin{align*}
            {\Large\Gamma}_l^j = \begin{cases}
                    \Sigma_{k = l + 1}^{n}{\Large\Omega}_j(i_k),  \quad  l + 1 \leq n\\
                        \quad \quad \quad 0,  \quad\qquad \text{ otherwise }
            \end{cases}
        \end{align*}
        So for a given index $(i, j)$ the hook length $h_{i, j} = (i_l - j) + \Gamma_l^j + 1$ and the product all the hook lengths of the young diagram can be found starting from first row and taking product of hook lengths of all columns then moving to other rows taking the product of each row and taking multiplication of these products. For going to a row, we need to check whether the row is empty or not i.e., by checking it $i_l$ is non-zero and we introduce the ${\Lambda}_k$ function which checks whether a given row is having boxes or not.
        \begin{align*}
            {\Large\Lambda}_k = \begin{cases}
                    1,  \quad i_k > 0\\
                        0  \quad \text{ otherwise }
            \end{cases}
        \end{align*}
        The product runs for all the indexes, where rows \textit{l} runs from $1$ to ${\delta}_n$ where ${\delta}_n$ is the number of non-zero rows i.e., ${\Large\delta}_n = \Sigma_{k = 1}^n{\Large\Lambda}_k$ and columns \textit{j}, run from $1$ to $i_l$ (i.e., number of boxes in each row). So the product of hook lengths for a given partition $i_1, i_2, \cdots , i_n$ is $\Pi_{l = 1}^{{\delta}_n}\Pi_{j = 1}^{i_{l}} [(i_l - j) + 1 + {\Large\Gamma}_l^j]$ and dimension of the corresponding representation to this partition is $\frac{n!}{\Pi_{l = 1}^{{\delta}_n}\Pi_{j = 1}^{i_{l}} [(i_l - j) + 1 + {\Large\Gamma}_l^j]}$ and now we add all such partitions and one partitions which does not include is $(n)$ whose dimension is $1$. So we add one to the sum. Hence we have the result that the sum of dimensions of irreducible complex representations of a symmetric group is $1 + \Sigma_{i_1, i_2, i_3, \ldots i_n}\frac{n!}{\Pi_{l = 1}^{{\delta}_n}\Pi_{j = 1}^{i_{l}} [(i_l - j) + 1 + {\Large\Gamma}_l^j]}$
    \end{proof}
    \begin{remark}
        The expression in Theorem \ref{sodoicrs} is equal to 1 plus the expression in Lemma \ref{number_of_second_order_terms} in respective conditions.
    \end{remark}
    Let $f^\lambda$ denote the dimension of the irreducible representation of the symmetric group $\mathfrak{S}_n$ indexed by the partition $\lambda \vdash n$. The total sum of dimensions is given by the formula in Theorem \ref{sodoicrs}.
    We have derived a summation formula (the "Nice Expression") which decomposes this sum into the trivial component plus $\lfloor n/2 \rfloor$ distinct integer terms
    
    $$
     1 + \sum_{k=0}^{\lfloor n/2 \rfloor - 1} A_k^{(n)}
    $$
    
    where $A_k^{(n)}$ is given by
    \begin{align*}
        = \begin{cases}
              \frac{n!}{2^{\frac{n - 2k}{2}}\left(\frac{n - 2k}{2}\right)!\left(2k\right)!} , & \text{ if $n$ is even,}\\
            \frac{n!}{2^{\frac{n - (2k + 1)}{2}}\left(\frac{n - (2k + 1)}{2}\right)!\left(2k + 1\right)!}, & \text{ if $n$ is odd}
        \end{cases}
    \end{align*}
    \begin{theorem}[Open Problem]\label{open_question_for_formula}
        There exists a surjective map $\Phi: \mathcal{P}(n) \setminus \{(n)\} \to \{0, 1, \dots, \lfloor n/2 \rfloor - 1\}$ such that the set of non-trivial Young diagrams corresponding to partition of $n$, partitions this set into disjoint fibers $\mathcal{S}_k = \Phi^{-1}(k)$, such that $A_k^{(n)} = \sum_{\lambda \in \mathcal{S}_k} f^\lambda$, for each $k$.
    \end{theorem}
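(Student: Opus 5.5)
The plan is to prove the statement by giving $\Phi$ explicitly, using the Robinson--Schensted correspondence from Lemma \ref{frobenius_Schur} together with a classical refinement of it. First reinterpret the numbers $A_k^{(n)}$. For $n$ even, set $j=\frac{n-2k}{2}$; for $n$ odd, set $j=\frac{n-(2k+1)}{2}$. In both cases $A_k^{(n)}=\frac{n!}{2^j\,j!\,(n-2j)!}$, which is exactly the number of involutions in $\mathfrak{S}_n$ with $j$ two-cycles and $n-2j$ fixed points. As $k$ runs over $0,\dots,\lfloor n/2\rfloor-1$, $j$ runs over $1,\dots,\lfloor n/2\rfloor$. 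So $A_k^{(n)}$ counts the involutions with exactly $n-2j$ fixed points, where $n-2j=2k$ if $n$ is even and $n-2j=2k+1$ if $n$ is odd. The identity, with $j=0$, is the missing ``$1$''.

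Next, I would invoke the refinement of Lemma \ref{frobenius_Schur} due to Sch\"utzenberger (see \cite{MR1464693} or Stanley's EC2, \S7.13). If an involution $\pi$ corresponds to the pair $(P,P)$ under the Robinson--Schensted algorithm, then the number of fixed points of $\pi$ equals the number of columns of odd length of $\operatorname{sh}(P)$. Write $c(\lambda)$ for the number of odd-length columns of $\lambda$; note that $c(\lambda)\equiv n \pmod 2$. Define
\[
\Phi(\lambda)=\left\lfloor \frac{c(\lambda)}{2}\right\rfloor ,
\]
which equals $c(\lambda)/2$ for $n$ even and $(c(\lambda)-1)/2$ for $n$ odd. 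The partition $(n)$ is the only one with $c(\lambda)=n$, because $c(\lambda)=n$ forces every column to have length $1$. So on $\mathcal{P}(n)\setminus\{(n)\}$ the function $\Phi$ takes values in $\{0,\dots,\lfloor n/2\rfloor-1\}$.

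The fiber identity then follows from the bijection. Because $f^\lambda$ is the number of standard tableaux of shape $\lambda$, we get
\[
\sum_{\lambda\in\mathcal{S}_k} f^\lambda=\#\{P \text{ standard}: c(\operatorname{sh}P)=n-2j\}=\#\{\text{involutions with } n-2j \text{ fixed points}\}=A_k^{(n)}.
\]
For surjectivity, take $\lambda=(n-j,j)$ with $1\le j\le\lfloor n/2\rfloor$. This shape has $j$ columns of length $2$ and $n-2j$ columns of length $1$, so $c(\lambda)=n-2j$ and $\lambda$ lands in the prescribed fiber. Since each $A_k^{(n)}>0$, every fiber is in fact nonempty.

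The main obstacle is Sch\"utzenberger's fixed-point theorem itself, because the paper only records the Symmetry Theorem \ref{symmetry_theorem}. If I cannot simply cite it, I would prove it by induction on $n$. I would remove the largest entry $n$ of $P$ by reverse bumping: if $n$ is a fixed point of $\pi$, deleting it from the tableau removes a cell ending an odd column and leaves $\pi$ restricted to $[n-1]$. If instead $n$ lies in a two-cycle $(a,n)$, the argument would use the known behaviour of row insertion for involutions, namely that deleting both $n$ and $a$ removes a vertical domino and so changes no column parity. That second step is the delicate part; alternatively I would derive it from Greene's theorem applied to the decreasing subsequence structure of $\pi$.
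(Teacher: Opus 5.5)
Your argument is correct, and it goes beyond the paper. The paper labels this statement an open problem and gives no proof, only computer checks for $n\le 11$ and a list of conjectured fibers.

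Your two ingredients are what is needed. First, rewriting $A_k^{(n)}=\frac{n!}{2^j j!(n-2j)!}$ identifies $A_k^{(n)}$ with the number of involutions having $n-2j$ fixed points. Second, Sch\"utzenberger's theorem transfers this count to standard tableaux whose shape has exactly $n-2j$ odd columns. So $\Phi(\lambda)=\lfloor c(\lambda)/2\rfloor$ works. You handle the parity $c(\lambda)\equiv n \pmod 2$, the exclusion of $(n)$, and surjectivity via $(n-j,j)$ correctly.

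Sch\"utzenberger's theorem is classical and can simply be cited. Equivalently, take the coefficient of $x_1x_2\cdots x_n$ in the $t$-refined Littlewood identity $\sum_\lambda t^{c(\lambda)}s_\lambda=\prod_i(1-tx_i)^{-1}\prod_{i<j}(1-x_ix_j)^{-1}$. This gives $\sum_{\lambda\vdash n}t^{c(\lambda)}f^\lambda=\sum_{\pi^2=e}t^{\mathrm{fix}(\pi)}$ directly.

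What your route buys over the paper's empirical approach is a uniform, conceptual description of the fibers. It reproduces observed property (1), since $(n-1,1)$ and $(n-2,1,1)$ are the only shapes with $c(\lambda)=n-2$. For $c(\lambda)=n-4$ and $n\ge 6$, however, your fiber is $\{(n-2,2),(n-3,2,1),(n-3,1,1,1),(n-4,2,2),(n-4,1,1,1,1)\}$, which differs from the paper's property (2). In fact the paper's list cannot be correct as printed. For $n=6$ it sums to $9+16+10+9=44\neq 45=A_1^{(6)}$, whereas yours gives $9+16+10+5+5=45$.

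One correction to your fallback sketch, if you want the induction instead of a citation. Suppose $\pi(n)=a<n$, and let $\pi'$ be $\pi$ with the $2$-cycle $(a,n)$ removed. The two cells separating the shape of $P(\pi)$ from that of $P(\pi')$ need not form a vertical domino. For $\pi=(2,3)\in\mathfrak{S}_3$ the shapes are $(2,1)$ and $(1)$, and the cells $(1,2)$ and $(2,1)$ lie in different columns.

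The correct statement is Beissinger's: $P(\pi)$ is obtained from $P(\pi')$ by row-inserting $a$, which creates a new cell at the end of some row $i$, and then appending $n$ to the end of row $i+1$. If rows $i$ and $i+1$ of the smaller shape have equal length, the two new cells form a vertical domino. Otherwise they lie in two columns of previous lengths $i-1$ and $i$, and both parities flip. Either way $c$ is unchanged.

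This step is a genuine lemma about row insertion, not a formality. Citing Sch\"utzenberger's theorem, or the generating-function identity above, is the cleaner way to close your argument.
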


    Observed Structural Properties: Based on computational verification until $n \le 11$, the fibers $\mathcal{S}_k$ exhibit the following stable properties:
    \begin{enumerate}
        \item For the maximal index $k_{max} = \lfloor n/2 \rfloor - 1$, the fiber consists exclusively of the primary hook partitions:
        \begin{equation*}
            \mathcal{S}_{k_{max}} = \{ (n-1, 1), (n-2, 1, 1) \}
        \end{equation*}
        \item For the index $k_{max} - 1 = \lfloor n/2 \rfloor - 2$, the fiber $\mathcal{S}_{k_{max} - 1} = \{(n -2 , 2), (n - 3, 2, 1), (n - 3, 1, 1, 1), (n - 4, 2, 1, 1)\}$
        \item For index $k_{\max} - 2 = \lfloor n/2 \rfloor - 3$, the fiber $\mathcal{S}_{k_{max} - 2} = \{(n - 3, 3), (n - 4, 3, 1), (n - 4, 2, 2), (n - 4, 1, 1, 1, 1), (n - 5, 3, 2), (n - 5, 2, 2, 1), (n -5, 2, 1, 1, 1), (n - 5, 1, 1, 1, 1, 1), (n -6, 3, 3), (n - 6, 2, 2, 2), (n - 6, 2, 2, 2), (n - 6, 2, 1, 1, 1)\}$
        \item For index $k_{\max} - i$, the fiber $\mathcal{S}_{k_{max} - i}$ contains the partition $( n - i - 1, i - 1)$
    \end{enumerate}
    \bibliography{ref}
\appendix
\section{Computational Verification via Magma}
\label{appendix:magma}

    In this appendix, we provide the Magma \cite{MR1484478} code used to verify Theorem \ref{identity_greater_other} and Theorem \ref{sodoicrs} for outer automorphisms of $\mathfrak{S}_6$. The following script calculates $T(G)$ and compares it against $|S_\alpha|$ for all outer automorphisms of $\mathfrak{S}_6$.
    Code executed
    \noindent
    \begin{verbatim}
        // Define the function to create and apply the homomorphism
        MapAndCheckIdentity := function()
            // Define the symmetric group S6
            S6 := SymmetricGroup(6);
        
            // Define the symmetric group S5 for permutations of the elements 1 to 5
            S5 := SymmetricGroup(5);
        
            // Generate all permutations of the symmetric group S5
            perms := [p : p in S5];
        
            // Convert each permutation to a list and store in a nested list
            nestedPerms := [Eltseq(p) : p in perms];
        
            // Initialize the list of elements mapped to their inverses
            mapped_elements := [];
            
            // Define the target group, also S6 in this case
            G := SymmetricGroup(6);
            // Define the permutations
            perm1 := S6!(1,2)(3,4)(5,6);
            perm2 := S6!(1,3)(2,5)(4,6);
            perm3 := S6!(1,4)(2,6)(3,5);
            perm4 := S6!(1,5)(2,4)(3,6);
            perm5 := S6!(1,6)(2,3)(4,5);
        
            // Add the permutations to a list
            perms := [perm1, perm2, perm3, perm4\\, perm5];
            // Iterate over each list of permutations in nestedPerms
            for i in [1..#nestedPerms] do
                // Convert each list of integers in nestedPerms[i] to a permutation in G
                perm_images := [G!(perms[nestedPerms[i][j]]) : j in [1..5]];
        
                // Define the homomorphism for this particular set of permutations
                hom := hom<S6 -> G | 
                    [S6!(1,2) -> perm_images[1],  
                     S6!(1,3) -> perm_images[2],  
                     S6!(1,4) -> perm_images[3],  
                     S6!(1,5) -> perm_images[4],  
                     S6!(1,6) -> perm_images[5]
                    ]>;
        
                // Function to check if an element is mapped to its inverse
                IsMappedToInverse := function(perm)
                    img := hom(perm);
                    return img eq Inverse(perm);
                end function;
        
                // Iterate over all elements in S6 and collect those mapped to their inverses
                for perm in S6 do
                    if IsMappedToInverse(perm) then
                        Append(~mapped_elements, <perm, hom(perm)>);
                    end if;
                end for;
                
                // Display the elements and their images under the homomorphism
          
        
                // Count the number of elements mapped to their inverses
                count := #mapped_elements;
                print "Number of elements mapped to their
                inverses under this homomorphism: ", count;
                
                // Clear the list for the next iteration
                mapped_elements := [];
            end for;
        end function;
        
        // Call the function to perform the mapping and checking
        MapAndCheckIdentity();
    \end{verbatim}
\end{document}